\documentclass[12pt]{article}
\usepackage[a4paper,margin=1in]{geometry}
\usepackage{amssymb}
\usepackage{amsmath}
\usepackage{amsfonts}
\usepackage{amsthm}
\usepackage{color}
\usepackage{float}
\usepackage[all]{xy}
\usepackage{scalerel}
\usepackage{mathabx}
\usepackage{delimset}
\usepackage{mathtools}
\usepackage{enumitem} 
\usepackage{microtype} 
\usepackage{hyperref} 
\usepackage{cleveref}

\newcommand{\R}{\mathbb{R}}
\newcommand{\Q}{\mathbb{Q}}

\newcommand{\U}{\mathrm{U}} 
\newcommand{\charpoly}{\mathrm{P}} 
\newcommand{\T}{\mathrm{T}} 
\newcommand{\B}{\mathcal{B}} 
\newcommand{\D}{\mathrm{D}} 
\newcommand{\Du}{\D_{\U}} 
 
\newcommand{\W}{\mathrm{W}}

\newtheorem{theorem}{Theorem}

\newtheorem{proposition}{Proposition}
\newtheorem{corollary}{Corollary}

\theoremstyle{definition} 
\newtheorem{definition}[theorem]{Definition} 
\theoremstyle{remark} 
\newtheorem{remark}[theorem]{Remark}

\begin{document}

\title{\textbf{The $\U$-Derivative Associated with Linear Recurrence Sequences}}
\author{Ronald Orozco L\'opez}

\newcommand{\Addresses}{{% additional braces for segregating \footnotesize
  \bigskip
  \footnotesize

  %R.~Orozco, \textsc{Departamento de Matematicas, Universidad de los Andes, 
  % Carrera 1 N. 18A-12 Bogot\'a, Colombia}\par\nopagebreak
  \textit{E-mail address}, R.~Orozco: \texttt{rj.orozco@uniandes.edu.co}
  
}}

\maketitle
%\tableofcontents

\begin{abstract} 
We associate a canonical operator to every linear recurrence sequence admitting a distinct-root Binet representation \(\U_n=\sum_{j=1}^{k}A_j\lambda_j^n. \) The construction is based on the spectral Binet operator \(\B_\U=\sum_{j=1}^{k}A_jT_{\lambda_j}, \) where \(T_{\lambda_j}f(x)=f(\lambda_jx)\) is the dilation operator associated with the characteristic root \(\lambda_j\). The corresponding recurrence derivative is defined by \(\D_\U=x^{-1}\B_\U.\) Its fundamental property is the monomial identity \(\D_\U(x^n)=\U_nx^{n-1},\) which establishes a direct correspondence between recurrence sequences and degree-lowering operators. We develop the operator theory of \(\D_\U\), including iteration, a spectral product rule, the associated Leibniz defect, and a quotient rule. The construction reveals a natural bifurcation determined by the value \(\U_0\), producing the Ward and non-Ward regimes as intrinsic operatorial phenomena. The product structure is shown to be governed by \(k-2\) residual spectral channels, leading to notions of spectral cancellation and spectral balance. As an application, we obtain a complete qualitative classification of second-order recurrence calculi. Using Horadam sequences as a universal second-order model, we prove that every such calculus is determined by the invariant triple \((\W_0,\Delta, Z_\W),\) where \(\W_0\) identifies the Ward/non-Ward type, \(\Delta\) determines the spectral geometry, and \(Z_\W\) determines regularity or singularity.
\end{abstract} 
\medskip \noindent \textbf{Keywords.} Linear recurrence sequences; Binet representation; Binet operator; recurrence derivative; spectral product rule; Leibniz defect; Ward calculus; operator classification.
\medskip \noindent \textbf{MSC 2020.} Primary: 39A27, 47B38. Secondary: 11B37, 05A30, 47A16.
% To be completed after the literature review. 

\section{Introduction}

Generalized calculi associated with numerical sequences have played an important role in finite operator theory, $q$-calculus, and related areas of discrete analysis. Classical examples include Jackson's $q$-derivative \cite{jackson1,jackson2,jackson3}, Ward's calculus of sequences \cite{ward}, and more recent calculi based on Fibonacci and Lucas sequences \cite{pashaev,orozco,orozco2}. A common feature of these theories is that ordinary integers are replaced by an alternative sequence governing the action of a derivative-like operator. In Ward's framework, for example, the basic sequence is assumed to satisfy \[ \U_0=0, \qquad \U_1=1, \qquad \U_n\neq0 \quad (n\ge1), \] and the resulting calculus is developed from these arithmetic data. Similar constructions appear throughout the literature under various specialized choices of sequences. The purpose of the present paper is to reverse this point of view. Instead of beginning with a particular factorial sequence, we start from an arbitrary linear recurrence sequence possessing a distinct-root Binet representation 
\[ \U_n=\sum_{j=1}^{k}A_j\lambda_j^n, \] 
and ask whether its spectral representation can be lifted canonically to an operator acting on functions. We show that this is indeed possible. To every recurrence sequence we associate the spectral Binet operator 
\[ \B_\U=\sum_{j=1}^{k}A_jT_{\lambda_j}, \] 
where \[ T_{\lambda_j}f(x)=f(\lambda_jx) \] is the dilation operator associated with the characteristic root \(\lambda_j\). The corresponding recurrence derivative is then defined by \[ \D_\U=x^{-1}\B_\U. \] The resulting operator satisfies the fundamental identity \[ \D_\U(x^n)=\U_nx^{n-1}, \] which recovers the recurrence sequence as the coefficient system governing monomial lowering. This operator-theoretic approach reveals structures that do not appear explicitly in the traditional formulation of recurrence calculi. First, the distinguished value \(\U_0\) produces an intrinsic bifurcation \[ \U_0=0 \qquad\text{versus}\qquad \U_0\neq0, \] leading respectively to the Ward and non-Ward regimes. Second, multiplication is governed by a spectral product rule whose correction term is controlled by the residual characteristic channels. This leads naturally to a theory of Leibniz defects, spectral cancellation, and spectral balance. The paper culminates in a complete qualitative classification of second-order recurrence calculi. Using Horadam sequences as a universal second-order model, we show that every such calculus is determined by the three invariants \[ (\W_0,\Delta, Z_\W), \] where \(\W_0\) determines the Ward/non-Ward type, \(\Delta\) determines the spectral geometry, and \(Z_\W\) determines regularity or singularity. The principal contributions of the paper are therefore: 
\begin{enumerate} 
\item the construction of the spectral Binet operator; 
\item the definition of the recurrence derivative \(\D_\U\); 
\item the fundamental monomial identity; 
\item the recovery of recurrence data from operator action; 
\item the spectral product rule and the associated Leibniz defect; \item a quotient rule adapted to the spectral setting; 
\item a complete classification of second-order recurrence calculi. \end{enumerate} 
Throughout the paper we restrict attention to recurrence sequences with distinct characteristic roots. Singular and confluent regimes are considered only insofar as they are necessary for the completeness of the second-order classification and will be developed systematically in subsequent work.

\section{Linear recurrences and Binet data}\label{sec:linear-recurrences-binet}

We begin by fixing the recurrence and spectral data required throughout
the paper. Let $\mathbb{K}$ be a field and let
$
\U=(\U_n)_{n\geq 0}
$
be a nonzero sequence satisfying the homogeneous linear recurrence
\begin{equation}
\label{eq:linear-recurrence}
\U_{n+k}
=
a_1\U_{n+k-1}
+a_2\U_{n+k-2}
+\cdots
+a_k\U_n,
\qquad n\geq 0,
\end{equation}
where
\begin{equation}
\label{eq:nonzero-last-coefficient}
a_k\neq 0.
\end{equation}
The condition $a_k\neq 0$ ensures that the recurrence in
\eqref{eq:linear-recurrence} has effective order $k$. The associated
characteristic polynomial is
\begin{equation}
\label{eq:characteristic-polynomial}
\charpoly(t)
=
t^k-a_1t^{k-1}-a_2t^{k-2}-\cdots-a_k.
\end{equation}

The recurrence coefficients $a_1,\ldots,a_k$ determine the recurrence
law, whereas the initial data
\begin{equation}
\label{eq:initial-data}
\U_0,\U_1,\ldots,\U_{k-1}
\end{equation}
select a particular solution. In particular, two sequences may satisfy
the same recurrence relation and hence have the same characteristic
polynomial while corresponding to different initial data.

Since
\begin{equation}
\label{eq:P-at-zero}
\charpoly(0)=-a_k\neq 0,
\end{equation}
zero is not a characteristic root. Throughout the main part of the paper
we impose the following simple-spectrum hypothesis.

\medskip
\noindent
\textbf{Standing spectral hypothesis.}
The polynomial $\charpoly$ splits over $\mathbb{K}$ into $k$ pairwise
distinct roots
\begin{equation}
\label{eq:distinct-characteristic-roots}
\lambda_1,\ldots,\lambda_k.
\end{equation}
Thus
\begin{equation}
\label{eq:factorized-characteristic-polynomial}
\charpoly(t)
=
\prod_{j=1}^{k}(t-\lambda_j),
\end{equation}
where
\begin{equation}
\label{eq:root-conditions}
\lambda_i\neq\lambda_j
\quad (i\neq j),
\qquad
\lambda_j\neq 0
\quad (1\leq j\leq k).
\end{equation}

For each characteristic root $\lambda_j$, the geometric sequence
$(\lambda_j^n)_{n\geq0}$ satisfies \eqref{eq:linear-recurrence}.
Indeed, the identity
$
\charpoly(\lambda_j)=0
$
gives
\begin{equation}
\label{eq:characteristic-root-identity}
\lambda_j^k
=
a_1\lambda_j^{k-1}
+a_2\lambda_j^{k-2}
+\cdots
+a_k,
\end{equation}
and multiplication by $\lambda_j^n$ yields
\begin{equation}
\label{eq:geometric-recurrence}
\lambda_j^{n+k}
=
a_1\lambda_j^{n+k-1}
+a_2\lambda_j^{n+k-2}
+\cdots
+a_k\lambda_j^n.
\end{equation}

The distinctness of the characteristic roots implies that these
geometric solutions are linearly independent. This gives the spectral
representation that underlies the operator construction developed in the
next section.

\begin{theorem}[Binet representation]
\label{thm:Binet-representation}
Assume that \eqref{eq:distinct-characteristic-roots}--\eqref{eq:root-conditions}
hold. Then there exist unique coefficients
$
A_1,\ldots,A_k\in\mathbb{K}
$
such that
\begin{equation}
\label{eq:Binet-representation}
\U_n
=
\sum_{j=1}^{k}A_j\lambda_j^n,
\qquad n\geq0.
\end{equation}
Conversely, every sequence of the form \eqref{eq:Binet-representation}
satisfies the recurrence \eqref{eq:linear-recurrence}.
\end{theorem}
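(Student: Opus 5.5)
The proof has two directions: a converse that is immediate, and existence and uniqueness, which reduce to the invertibility of a Vandermonde matrix.

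\emph{Converse.} This follows from what was already shown. By \eqref{eq:geometric-recurrence}, each geometric sequence $(\lambda_j^n)_{n\ge0}$ satisfies the recurrence \eqref{eq:linear-recurrence}. The recurrence is linear and homogeneous, so its solutions form a $\mathbb{K}$-vector space. Multiplying \eqref{eq:geometric-recurrence} by $A_j$ and summing over $j$ shows that $\sum_j A_j\lambda_j^n$ satisfies \eqref{eq:linear-recurrence}.

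\emph{Existence and uniqueness.} The key observation is that a solution of \eqref{eq:linear-recurrence} is completely determined by its initial data $\U_0,\dots,\U_{k-1}$. Indeed, since the recurrence expresses $\U_{n+k}$ in terms of the $k$ preceding terms, induction on $n$ shows that two solutions agreeing at $n=0,\dots,k-1$ agree for all $n$. So the solution space is identified with $\mathbb{K}^k$ via the initial data. Hence it suffices to find $A_1,\dots,A_k$ with
\[
\sum_{j=1}^k A_j\lambda_j^{m}=\U_m,\qquad 0\le m\le k-1 .
\]
Once these hold, the sequence $\sum_j A_j\lambda_j^n$ solves the recurrence by the converse part and matches $\U$ on the initial data, so it equals $\U$ for every $n$.

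The displayed system is $V\mathbf{A}=(\U_0,\dots,\U_{k-1})^{T}$, where $V=(\lambda_j^{m})_{0\le m\le k-1,\,1\le j\le k}$ is a Vandermonde matrix. Its determinant is
\[
\det V=\prod_{1\le i<j\le k}(\lambda_j-\lambda_i),
\]
which is nonzero by the distinctness assumption \eqref{eq:root-conditions}. Therefore $V$ is invertible over $\mathbb{K}$, and the system has exactly one solution $\mathbf{A}\in\mathbb{K}^k$. This gives existence.

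For uniqueness, suppose two coefficient vectors both represent $\U$ for all $n\ge0$. Then in particular they both satisfy the system above for $n=0,\dots,k-1$. Since $V$ is invertible, the two vectors coincide. Equivalently, this shows that the geometric sequences $(\lambda_j^n)_{n\ge0}$ are linearly independent.

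The only substantive step is the invertibility of $V$. One can cite the Vandermonde determinant formula directly. Alternatively, if $V^{T}\mathbf{c}=0$ for some nonzero $\mathbf{c}$, then the polynomial $\sum_m c_m t^m$ has degree at most $k-1$ but vanishes at the $k$ distinct points $\lambda_j$, which is impossible. The hypothesis $\lambda_j\neq0$ is not needed for this argument; it serves only the later dilation construction.
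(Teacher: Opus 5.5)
Your proof is correct and follows essentially the same route as the paper. The converse comes from linearity applied to the geometric solutions; existence comes from solving the Vandermonde system on the initial data and invoking uniqueness of solutions with prescribed initial values; uniqueness of the coefficients comes from invertibility of the Vandermonde matrix (your added remarks that $\lambda_j\neq0$ is not needed here, and the polynomial-root argument for invertibility, are both accurate).
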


\begin{proof}
For every $j$, the sequence $(\lambda_j^n)_{n\geq0}$ satisfies
\eqref{eq:linear-recurrence} by \eqref{eq:geometric-recurrence}.
Therefore every sequence of the form
\begin{equation}
\label{eq:general-geometric-combination}
V_n
=
\sum_{j=1}^{k}A_j\lambda_j^n
\end{equation}
also satisfies \eqref{eq:linear-recurrence}.

It remains to choose the coefficients so that $V_n$ has the prescribed
initial data. Requiring
$
V_r=\U_r,
\qquad
r=0,\ldots,k-1,
$
gives the linear system
\begin{equation}
\label{eq:Vandermonde-system}
\begin{pmatrix}
1 & 1 & \cdots & 1\\
\lambda_1 & \lambda_2 & \cdots & \lambda_k\\
\lambda_1^2 & \lambda_2^2 & \cdots & \lambda_k^2\\
\vdots & \vdots & & \vdots\\
\lambda_1^{k-1} & \lambda_2^{k-1} & \cdots
& \lambda_k^{k-1}
\end{pmatrix}
\begin{pmatrix}
A_1\\
A_2\\
\vdots\\
A_k
\end{pmatrix}
=
\begin{pmatrix}
\U_0\\
\U_1\\
\vdots\\
\U_{k-1}
\end{pmatrix}.
\end{equation}
The coefficient matrix is the Vandermonde matrix associated with
$\lambda_1,\ldots,\lambda_k$, whose determinant is
\begin{equation}
\label{eq:Vandermonde-determinant}
\prod_{1\leq i<j\leq k}(\lambda_j-\lambda_i).
\end{equation}
By the distinct-root hypothesis, the quantity in
\eqref{eq:Vandermonde-determinant} is nonzero. Hence
\eqref{eq:Vandermonde-system} has a unique solution
$A_1,\ldots,A_k$.

The resulting sequence $(V_n)_{n\geq0}$ and $\U$ satisfy the same
order-$k$ recurrence and agree at the first $k$ indices. Uniqueness of
solutions of \eqref{eq:linear-recurrence} with prescribed initial data
therefore gives
$
V_n=\U_n,
\qquad n\geq0.
$
This proves \eqref{eq:Binet-representation}. The converse follows from
linearity and the fact that every geometric component
$\lambda_j^n$ satisfies the recurrence.
\end{proof}

The preceding theorem separates two types of data. The characteristic
roots
$
\lambda_1,\ldots,\lambda_k
$
are determined by the recurrence coefficients, while the Binet
coefficients
$
A_1,\ldots,A_k
$
depend on the particular solution selected by the initial conditions.

To make this relation explicit, define
\begin{equation}
\label{eq:initial-vector}
\mathbf{u}
=
\begin{pmatrix}
\U_0\\
\U_1\\
\vdots\\
\U_{k-1}
\end{pmatrix},
\qquad
\mathbf{A}
=
\begin{pmatrix}
A_1\\
A_2\\
\vdots\\
A_k
\end{pmatrix},
\end{equation}
and let
\begin{equation}
\label{eq:Vandermonde-matrix}
V(\lambda_1,\ldots,\lambda_k)
=
\begin{pmatrix}
1 & 1 & \cdots & 1\\
\lambda_1 & \lambda_2 & \cdots & \lambda_k\\
\lambda_1^2 & \lambda_2^2 & \cdots & \lambda_k^2\\
\vdots & \vdots & & \vdots\\
\lambda_1^{k-1} & \lambda_2^{k-1} & \cdots
& \lambda_k^{k-1}
\end{pmatrix}.
\end{equation}
Then
\begin{equation}
\label{eq:initial-spectral-correspondence}
\mathbf{u}
=
V(\lambda_1,\ldots,\lambda_k)\mathbf{A},
\end{equation}
and, since the Vandermonde matrix is invertible,
\begin{equation}
\label{eq:spectral-initial-correspondence}
\mathbf{A}
=
V(\lambda_1,\ldots,\lambda_k)^{-1}\mathbf{u}.
\end{equation}
Thus the initial values and the Binet coefficients provide two coordinate
systems for the same recurrence solution.

Evaluating \eqref{eq:Binet-representation} at $n=0$ gives the identity
\begin{equation}
\label{eq:U0-spectral-sum}
\U_0
=
\sum_{j=1}^{k}A_j.
\end{equation}
This apparently elementary relation will play a distinguished role in the
operator theory. In particular, the cases $\U_0=0$ and $\U_0\neq0$
produce different behavior of the associated $\U$-derivative on constants.

The data introduced in this section are precisely those needed for the
operator construction: the characteristic roots determine the available spectral modes, and the initial conditions determine their coefficients. In the next section, the geometric modes $\lambda_j^n$ will be replaced by dilation operators, producing the spectral Binet operator and the associated $\U$-derivative. The characteristic roots and the Binet coefficients constitute the spectral data from which the associated operator will be constructed.

\section{Binet operator and U-derivative}\label{sec:Binet-operator-U-derivative}

We now transfer the Binet representation from recurrence sequences to
operators. Throughout this section, the hypotheses and notation of
Section~\ref{sec:linear-recurrences-binet} remain in force. In particular, $\U=(\U_n)_{n\geq0}$ is a nonzero recurrence sequence whose characteristic polynomial has $k$ pairwise distinct nonzero roots
$\lambda_1,\ldots,\lambda_k$, and
\begin{equation}
\label{eq:Binet-representation-operator-section}
\U_n =\sum_{j=1}^{k}A_j\lambda_j^n,
\qquad n\geq0.
\end{equation}

The essential observation is that each geometric spectral mode
$\lambda_j^n$ has a natural operator counterpart given by dilation.
This correspondence allows the spectral representation of $\U$ to
be transferred directly to the polynomial algebra. 

\subsection{Dilation operators and the Binet operator}

For each $\lambda\in\mathbb K$, define the dilation operator
\begin{equation}
\label{eq:dilation-operator}
\T_\lambda f(x)=
f(\lambda x).
\end{equation}
Whenever the underlying function space is stable under the substitution
$x\mapsto\lambda x$, the operator $\T_\lambda$ acts linearly on that
space. In particular,
\begin{equation}
\label{eq:dilation-polynomials}
\T_\lambda:\mathbb K[x]\longrightarrow\mathbb K[x].
\end{equation}

Its action on monomials is diagonal:
\begin{equation}
\label{eq:dilation-monomial}
\T_\lambda x^n
=
\lambda^n x^n,
\qquad n\geq0.
\end{equation}
Thus, for every characteristic root $\lambda_j$,
\begin{equation}
\label{eq:dilation-characteristic-mode}
\T_{\lambda_j}x^n
=
\lambda_j^n x^n.
\end{equation}

Equation~\eqref{eq:dilation-characteristic-mode} is the operator analogue
of the geometric mode appearing in the Binet representation. This suggests
the following construction.

\begin{definition}[Spectral Binet operator]
\label{def:spectral-Binet-operator}
The spectral Binet operator associated with the recurrence sequence $\U$
is defined by
\begin{equation}
\label{eq:spectral-Binet-operator}
\B_{\U}=\sum_{j=1}^{k}A_j\T_{\lambda_j}.
\end{equation}
Equivalently,
\begin{equation}
\label{eq:spectral-Binet-operator-explicit}
\B_{\U}f(x)=\sum_{j=1}^{k}A_jf(\lambda_jx).
\end{equation}
\end{definition}

The notation $\B_{\U}$ is important. Although the characteristic roots
are determined by the recurrence law, the coefficients $A_j$ depend on
the initial data. Consequently, the Binet operator is attached to the
particular recurrence sequence $\U$, not merely to its characteristic
polynomial.

\begin{proposition}
\label{prop:Binet-operator-linearity}
The operator $\B_{\U}$ is linear on every vector space on which all the
dilations $\T_{\lambda_1},\ldots,\T_{\lambda_k}$ are defined.
\end{proposition}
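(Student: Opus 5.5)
The plan is to reduce the linearity of $\B_{\U}$ to two facts: each dilation $\T_{\lambda_j}$ is linear, and any finite $\mathbb{K}$-linear combination of linear operators is linear. Let $\mathcal{F}$ be a $\mathbb{K}$-vector space of functions, with pointwise operations, on which every $\T_{\lambda_j}$ is defined. This means $f\in\mathcal{F}$ implies $x\mapsto f(\lambda_j x)$ lies in $\mathcal{F}$, as in \eqref{eq:dilation-operator}.

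First, I will verify linearity of a single dilation directly from \eqref{eq:dilation-operator}. For $f,g\in\mathcal{F}$ and $\alpha,\beta\in\mathbb{K}$, pointwise evaluation gives
\[
\T_\lambda(\alpha f+\beta g)(x)=(\alpha f+\beta g)(\lambda x)=\alpha f(\lambda x)+\beta g(\lambda x)=\alpha\,\T_\lambda f(x)+\beta\,\T_\lambda g(x).
\]
Here the only input is that addition and scalar multiplication in $\mathcal{F}$ are defined pointwise, so evaluating at the point $\lambda x$ commutes with them.

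Second, I will apply the explicit form \eqref{eq:spectral-Binet-operator-explicit}. This gives $\B_{\U}(\alpha f+\beta g)(x)=\sum_{j}A_j(\alpha f+\beta g)(\lambda_j x)$. Expanding each term by the first step and regrouping the finite sum, using commutativity and distributivity in $\mathbb{K}$, yields $\alpha\sum_j A_j f(\lambda_j x)+\beta\sum_j A_j g(\lambda_j x)=\alpha\B_{\U}f(x)+\beta\B_{\U}g(x)$. I will also note that $\B_{\U}f\in\mathcal{F}$, because $\mathcal{F}$ is closed under the $\T_{\lambda_j}$ and under finite linear combinations. So $\B_{\U}$ is an endomorphism of $\mathcal{F}$; in particular this covers $\mathbb{K}[x]$ by \eqref{eq:dilation-polynomials}.

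There is no real obstacle. The only point needing care is the interpretation of the hypothesis "all the dilations are defined": it must mean stability of $\mathcal{F}$ under each substitution $x\mapsto\lambda_j x$, so that $\B_{\U}$ maps $\mathcal{F}$ into itself and not merely into some larger function space. I will state this explicitly at the outset.
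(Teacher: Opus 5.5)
Your proof is correct and follows essentially the paper's route: the paper expands $\B_{\U}(\alpha f+\beta g)=\sum_j A_j\T_{\lambda_j}(\alpha f+\beta g)$ term by term, invokes linearity of each $\T_{\lambda_j}$, and regroups the finite sum, exactly as you do. Your pointwise check that each $\T_\lambda$ is linear, and your remark that $\B_{\U}$ maps the space into itself, make explicit what the paper's one-line expansion assumes.
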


\begin{proof}
Let $f,g$ belong to such a vector space and let
$\alpha,\beta\in\mathbb K$. Then
\begin{align*}
\B_{\U}(\alpha f+\beta g)
&=
\sum_{j=1}^{k}
A_j\T_{\lambda_j}(\alpha f+\beta g) \\
&=
\sum_{j=1}^{k}
A_j
\left(
\alpha\T_{\lambda_j}f
+
\beta\T_{\lambda_j}g
\right) \\
&=
\alpha\B_{\U}f
+
\beta\B_{\U}g.
\end{align*}
\end{proof}

The Binet representation is recovered exactly from the action of
$\B_{\U}$ on the monomial basis.

\begin{proposition}[Diagonal action of the Binet operator]
\label{prop:Binet-monomial-action}
For every $n\geq0$,
\begin{equation}
\label{eq:Binet-monomial-action}
\B_{\U}x^n
=
\U_nx^n.
\end{equation}
\end{proposition}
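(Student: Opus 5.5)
The plan is a direct computation on monomials that combines three earlier facts: the definition of $\B_{\U}$ in \eqref{eq:spectral-Binet-operator}, the diagonal action of dilations in \eqref{eq:dilation-characteristic-mode}, and the Binet representation of Theorem~\ref{thm:Binet-representation}.

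\begin{enumerate}
\item Fix $n\geq0$. By Proposition~\ref{prop:Binet-operator-linearity}, $\B_{\U}$ is a finite linear combination of the dilations $\T_{\lambda_j}$, each of which maps $\mathbb K[x]$ into itself by \eqref{eq:dilation-polynomials}. So we may apply $\B_{\U}$ termwise:
\[
\B_{\U}x^n=\sum_{j=1}^{k}A_j\T_{\lambda_j}x^n .
\]
\item Substitute \eqref{eq:dilation-characteristic-mode}, that is, $\T_{\lambda_j}x^n=\lambda_j^nx^n$.
\item Factor out the common monomial $x^n$, which gives
\[
\B_{\U}x^n=\Bigl(\sum_{j=1}^{k}A_j\lambda_j^n\Bigr)x^n .
\]
\item By \eqref{eq:Binet-representation-operator-section}, the bracket equals $\U_n$, and the identity follows.
\end{enumerate}

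The only point needing attention is the case $n=0$. There $x^0=1$ and $\lambda_j^0=1$; the latter is unambiguous since each $\lambda_j\neq0$ by \eqref{eq:root-conditions}. The computation then gives $\B_{\U}1=\bigl(\sum_jA_j\bigr)\cdot 1=\U_0$, which is consistent with \eqref{eq:U0-spectral-sum}.

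There is no real obstacle. The statement amounts to the observation that the monomials $x^n$ are simultaneous eigenvectors of all the $\T_{\lambda_j}$, with eigenvalues $\lambda_j^n$. Because the coefficients $A_j$ are the Binet coefficients of $\U$ specifically, the corresponding linear combination of eigenvalues is exactly $\U_n$.
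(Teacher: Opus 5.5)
Your proposal is correct and follows the paper's proof: you expand $\B_{\U}x^n$ by definition, use $\T_{\lambda_j}x^n=\lambda_j^nx^n$, factor out $x^n$, and identify the coefficient with $\U_n$ via the Binet representation. Your separate treatment of $n=0$ and your appeal to linearity are harmless but unnecessary, since the computation is uniform in $n$ and applying the sum termwise is just the definition of $\B_{\U}$.
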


\begin{proof}
Using \eqref{eq:dilation-characteristic-mode} and the Binet representation
\eqref{eq:Binet-representation-operator-section}, we obtain
\begin{align*}
\B_{\U}x^n
=
\sum_{j=1}^{k}
A_j\T_{\lambda_j}x^n 
=
\sum_{j=1}^{k}
A_j\lambda_j^n x^n 
=
\left(
\sum_{j=1}^{k}A_j\lambda_j^n
\right)x^n 
=
\U_nx^n.
\end{align*}
\end{proof}

Thus the monomial basis diagonalizes $\B_{\U}$, with eigenvalue sequence
$(\U_n)_{n\geq0}$. In particular, the recurrence sequence is encoded in
the spectrum of the Binet operator relative to the monomial basis.

\begin{corollary}
\label{cor:Binet-operator-recovery}
The action of $\B_{\U}$ on the monomial basis uniquely determines the
sequence $\U$.
\end{corollary}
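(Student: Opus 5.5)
The plan is to read the statement as a statement about the operator alone: if we know the linear map $\B_{\U}$ restricted to the monomials $x^n$, $n\geq0$, then we can read off every term $\U_n$, without knowing in advance the particular Binet data $(A_j,\lambda_j)$ used to build it. The tool is Proposition~\ref{prop:Binet-monomial-action}, which says that $\B_{\U}x^n=\U_nx^n$ for all $n\geq0$. So each monomial is an eigenvector of $\B_{\U}$, and its eigenvalue is exactly $\U_n$.

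First, fix $n\geq0$. The element $\B_{\U}x^n\in\mathbb K[x]$ is determined by the action of $\B_{\U}$ on $x^n$. By Proposition~\ref{prop:Binet-monomial-action} it equals $\U_nx^n$. Since $\{x^m\}_{m\geq0}$ is a basis of $\mathbb K[x]$, the coefficient of $x^n$ in a polynomial is well defined. Hence $\U_n$ is uniquely recovered as the coefficient of $x^n$ in $\B_{\U}x^n$. Second, we state this as an injectivity claim. Suppose two sequences $\U$ and $\U'$ satisfying the standing hypotheses have Binet operators with $\B_{\U}x^n=\B_{\U'}x^n$ for every $n$. Then $\U_nx^n=\U'_nx^n$, and comparing coefficients gives $\U_n=\U'_n$ for all $n\geq0$, so $\U=\U'$.

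It would also be worth noting that only finitely many monomials are actually needed. Knowing $\B_{\U}x^r$ for $r=0,\ldots,k-1$ gives the initial vector $\mathbf u$. Together with the recurrence coefficients, this determines $\U$, by the uniqueness argument in Theorem~\ref{thm:Binet-representation}. Equivalently, it determines $\mathbf A=V(\lambda_1,\ldots,\lambda_k)^{-1}\mathbf u$ by \eqref{eq:spectral-initial-correspondence}. This refinement is optional.

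I expect no genuine obstacle. The only point needing care is the distinction between an operator being determined by its formula and a sequence being determined by the operator's action. The argument avoids this by working only with the values $\B_{\U}x^n$ as elements of $\mathbb K[x]$ and using that the monomials are linearly independent, so coefficient extraction is legitimate.
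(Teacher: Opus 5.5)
Your proposal is correct and follows the paper's own proof: both take Proposition~\ref{prop:Binet-monomial-action} and read off $\U_n$ as the unique scalar with $\B_{\U}x^n=\U_nx^n$. Your injectivity reformulation and the optional finite-data remark are harmless extras not needed for the statement; as you note, the remark also uses the recurrence coefficients.
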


\begin{proof}
For every $n\geq0$, equation
\eqref{eq:Binet-monomial-action} uniquely determines $\U_n$ as the
scalar satisfying
$
\B_{\U}x^n=\U_nx^n.
$
\end{proof}

The operator $\B_{\U}$ therefore transfers the discrete spectral data
of the recurrence sequence to a degree-preserving operator on
$\mathbb K[x]$.

\subsection{Definition of the U-derivative}

The Binet operator preserves monomial degree. To obtain a
derivative-type operator, we compose it with the degree-lowering
multiplication operator
\begin{equation}
\label{eq:M-x-minus-one}
M_{x^{-1}}f(x)
=
x^{-1}f(x).
\end{equation}
On polynomials,
\begin{equation}
\label{eq:M-x-minus-one-domain}
M_{x^{-1}}:
\mathbb K[x]
\longrightarrow
x^{-1}\mathbb K[x]
\subseteq
\mathbb K[x,x^{-1}].
\end{equation}

\begin{definition}[$\U$-derivative]
\label{def:U-derivative}
The $\U$-derivative associated with the recurrence sequence $\U$ is
the operator
\begin{equation}
\label{eq:U-derivative-operator}
\D_{\U}
=
M_{x^{-1}}\B_{\U}.
\end{equation}
Hence
\begin{equation}
\label{eq:U-derivative-explicit}
\D_{\U}f(x)
=
\frac{1}{x}
\sum_{j=1}^{k}A_jf(\lambda_jx).
\end{equation}
\end{definition}

At the purely algebraic level,
\begin{equation}
\label{eq:U-derivative-polynomial-Laurent}
\D_{\U}:
\mathbb K[x]
\longrightarrow
x^{-1}\mathbb K[x]
\subseteq
\mathbb K[x,x^{-1}].
\end{equation}
The question of when $\mathbb K[x]$ itself is invariant under
$\D_{\U}$ will follow from the monomial action below.

As with the Binet operator, $\D_{\U}$ depends on the particular recurrence
sequence. Two different solutions of the same recurrence have the same
characteristic roots but may have different Binet coefficients, and hence
may define different $\U$-derivatives.

\begin{proposition}[Linearity]
\label{prop:U-derivative-linearity}
The operator $\D_{\U}$ is linear on its domain.
\end{proposition}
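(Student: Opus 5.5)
The plan is to reduce linearity of $\D_{\U}$ to two facts already available: the linearity of the Binet operator $\B_{\U}$ (Proposition~\ref{prop:Binet-operator-linearity}), and the linearity of the multiplication operator $M_{x^{-1}}$. Since $\D_{\U}=M_{x^{-1}}\B_{\U}$ by Definition~\ref{def:U-derivative}, it is a composition of two linear maps. It is therefore linear wherever the composition is defined.

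Concretely, fix a vector space $\mathcal F$ of functions on which all dilations $\T_{\lambda_1},\ldots,\T_{\lambda_k}$ are defined; for instance $\mathcal F=\mathbb K[x]$, by \eqref{eq:dilation-polynomials}. Take the codomain to be a space in which division by $x$ makes sense, such as $\mathbb K[x,x^{-1}]$, as in \eqref{eq:U-derivative-polynomial-Laurent}. For $f,g\in\mathcal F$ and $\alpha,\beta\in\mathbb K$, first apply Proposition~\ref{prop:Binet-operator-linearity} to get
\[
\B_{\U}(\alpha f+\beta g)=\alpha\B_{\U}f+\beta\B_{\U}g .
\]
Then multiply by $x^{-1}$ and use distributivity of multiplication in the ring $\mathbb K[x,x^{-1}]$, or pointwise for functions away from $x=0$, to obtain
\[
\D_{\U}(\alpha f+\beta g)=\alpha\D_{\U}f+\beta\D_{\U}g .
\]
Alternatively, one can write everything out directly from \eqref{eq:U-derivative-explicit}:
\[
\frac1x\sum_{j=1}^{k}A_j\bigl(\alpha f(\lambda_jx)+\beta g(\lambda_jx)\bigr),
\]
and split the sum.

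The only point needing care is the phrase ``on its domain''. I would state explicitly that the domain is any $\mathbb K$-vector space stable under the dilations $x\mapsto\lambda_jx$. The target is the corresponding space of expressions divided by $x$, so that $M_{x^{-1}}$ is a well-defined linear map there. Once this is fixed, there is no real obstacle, and the argument is purely formal.
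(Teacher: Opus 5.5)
Your proof is correct and follows the same route as the paper, which derives linearity from Definition~\ref{def:U-derivative}, i.e.\ from writing $\D_{\U}=M_{x^{-1}}\B_{\U}$ as a composition with a linear multiplication operator. The paper's one-line proof cites the diagonal monomial action (Proposition~\ref{prop:Binet-monomial-action}). You cite Proposition~\ref{prop:Binet-operator-linearity} instead, and that is the ingredient actually needed: the eigenvalue identity $\B_{\U}x^n=\U_nx^n$ does not by itself give linearity on a general function space.
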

\begin{proof}
Follows from Definition \ref{def:U-derivative} and Proposition \ref{prop:Binet-monomial-action}.
\end{proof}

The definition of $\D_{\U}$ is spectral: it is obtained from the Binet
representation by replacing each geometric mode with the corresponding
dilation operator and subsequently lowering degree by one. Its
derivative-like behavior is therefore a consequence of the recurrence
structure rather than an independently imposed monomial rule.

\subsection{The fundamental monomial identity}

We now obtain the basic identity on which the subsequent operator theory
is built.

\begin{theorem}[Fundamental monomial identity]
\label{thm:fundamental-monomial-identity}
For every $n\geq0$,
\begin{equation}
\label{eq:fundamental-monomial-identity}
\D_{\U}x^n
=
\U_nx^{n-1}.
\end{equation}
\end{theorem}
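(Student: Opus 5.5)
The identity follows by composing the two factors in Definition~\ref{def:U-derivative}, $\D_{\U}=M_{x^{-1}}\B_{\U}$, and using the diagonal action of the Binet operator. Fix $n\geq0$. First apply Proposition~\ref{prop:Binet-monomial-action}, which gives $\B_{\U}x^n=\U_nx^n$. Then apply the multiplication operator $M_{x^{-1}}$ from \eqref{eq:M-x-minus-one}. Working in $\mathbb K[x,x^{-1}]$ as in \eqref{eq:U-derivative-polynomial-Laurent}, this yields
\[
\D_{\U}x^n=M_{x^{-1}}\bigl(\U_nx^n\bigr)=\U_nx^{n-1}.
\]

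Alternatively, one can compute directly from the explicit formula \eqref{eq:U-derivative-explicit}. Substituting $f(x)=x^n$ gives $x^{-1}\sum_{j}A_j(\lambda_jx)^n$. Factoring out $x^n$ leaves $x^{n-1}\sum_jA_j\lambda_j^n$, and the Binet representation of Theorem~\ref{thm:Binet-representation} identifies this sum with $\U_n$.

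The only point needing care is the case $n=0$. Here the right-hand side is $\U_0x^{-1}$, which lies in $\mathbb K[x,x^{-1}]$ but not in $\mathbb K[x]$ unless $\U_0=0$. The identity should therefore be read in the Laurent setting fixed in \eqref{eq:U-derivative-polynomial-Laurent}. There the computation is uniform in $n$, with $\D_{\U}1=\U_0x^{-1}$, consistent with \eqref{eq:U0-spectral-sum}. This is precisely the Ward/non-Ward bifurcation the paper anticipates.
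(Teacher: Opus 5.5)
Your proposal is correct and matches the paper's proof. The paper computes directly from the explicit formula \eqref{eq:U-derivative-explicit}, factoring $x^{n-1}$ out of $x^{-1}\sum_j A_j(\lambda_j x)^n$ and invoking the Binet representation, which is exactly your second argument; your first route through Proposition~\ref{prop:Binet-monomial-action} is the same computation routed via $\B_{\U}$, and your reading of the $n=0$ case in the Laurent setting agrees with the paper's treatment of $\D_{\U}(1)=\U_0x^{-1}$.
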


\begin{proof}
Using Definition~\ref{def:U-derivative}, we have
\begin{align*}
\D_{\U}x^n
=
\frac{1}{x}
\sum_{j=1}^{k}
A_j(\lambda_jx)^n 
=
\left(
\sum_{j=1}^{k}
A_j\lambda_j^n
\right)x^{n-1}.
\end{align*}
By the Binet representation,
$
\sum_{j=1}^{k}A_j\lambda_j^n
=
\U_n,
$
and hence
$
\D_{\U}x^n
=
\U_nx^{n-1}.
$
\end{proof}

For ordinary differentiation,
\begin{equation}
\label{eq:ordinary-derivative-comparison}
\frac{d}{dx}x^n
=
nx^{n-1},
\end{equation}
whereas the recurrence-induced operator satisfies
\eqref{eq:fundamental-monomial-identity}. Thus $\U_n$ occupies, at the
level of monomial lowering, the role played by $n$ in ordinary
differentiation.

This analogy concerns only the monomial action. It does not imply that
$\D_{\U}$ satisfies the classical Leibniz rule, quotient rule, or any
other identity of ordinary differential calculus. Those properties must be
derived from the spectral representation of the operator.

By linearity, the fundamental identity immediately determines the action
of $\D_{\U}$ on arbitrary polynomials.

\begin{proposition}[Action on polynomials]
\label{prop:U-derivative-polynomials}
Let $f(x)=\sum_{n=0}^{N}c_nx^n$. Then
\begin{equation}
\label{eq:U-derivative-polynomial}
\D_{\U}f(x)
=
\sum_{n=0}^{N}c_n\U_nx^{n-1}.
\end{equation}
Equivalently,
\begin{equation}
\label{eq:U-derivative-polynomial-separated}
\D_{\U}f(x)
=
c_0\U_0x^{-1}
+
\sum_{n=1}^{N}c_n\U_nx^{n-1}.
\end{equation}
\end{proposition}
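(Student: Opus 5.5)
The claim follows from linearity of $\D_{\U}$ combined with the fundamental monomial identity, so the proof is a single short chain of equalities. I would first note that $f$ is a finite linear combination of the monomials $x^0,x^1,\dots,x^N$ with coefficients $c_n\in\mathbb K$. Proposition~\ref{prop:U-derivative-linearity} (equivalently, linearity of each dilation $\T_{\lambda_j}$ followed by multiplication by $x^{-1}$) then lets me distribute $\D_{\U}$ over this finite sum, which gives
\[
\D_{\U}f(x)=\sum_{n=0}^{N}c_n\,\D_{\U}x^n .
\]

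Next I would apply Theorem~\ref{thm:fundamental-monomial-identity} to each term, replacing $\D_{\U}x^n$ by $\U_nx^{n-1}$. This yields \eqref{eq:U-derivative-polynomial} directly. As a cross-check independent of the linearity proposition, one can expand $\frac1x\sum_j A_j f(\lambda_j x)=\frac1x\sum_j A_j\sum_n c_n\lambda_j^n x^n$ and swap the two finite sums. The inner sum $\sum_j A_j\lambda_j^n$ is $\U_n$ by the Binet representation (Theorem~\ref{thm:Binet-representation}), which gives the same formula.

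For the separated form \eqref{eq:U-derivative-polynomial-separated}, I would split off the $n=0$ term. Since $x^{-1}$ appears as $x^{0-1}$, that term equals $c_0\U_0x^{-1}$, and the remaining terms with $n\ge1$ are honest polynomial terms. I would also remark that this term is where the Laurent part lives: it vanishes for every $f$ exactly when $\U_0=\sum_jA_j=0$, by \eqref{eq:U0-spectral-sum}.

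There is no real obstacle here. The only point needing care is the domain: the $n=0$ term lands in $x^{-1}\mathbb K[x]$, so all equalities must be read in $\mathbb K[x,x^{-1}]$ (consistent with \eqref{eq:U-derivative-polynomial-Laurent}) and not in $\mathbb K[x]$.
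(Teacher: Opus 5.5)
Your proposal is correct and follows the paper's proof: use linearity to distribute $\D_{\U}$ over the finite sum, apply the fundamental monomial identity to each term, and split off the $n=0$ term to obtain the separated form. Your cross-check, which expands $\frac1x\sum_j A_j f(\lambda_j x)$ directly and uses the Binet representation, is also valid and self-contained, but the proof does not need it.
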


\begin{proof}
By linearity and Theorem~\ref{thm:fundamental-monomial-identity},
\begin{align*}
\D_{\U}f(x)
=
\D_{\U}
\left(
\sum_{n=0}^{N}c_nx^n
\right) 
=
\sum_{n=0}^{N}
c_n\D_{\U}x^n 
=
\sum_{n=0}^{N}
c_n\U_nx^{n-1}.
\end{align*}
Separating the term $n=0$ gives
\eqref{eq:U-derivative-polynomial-separated}.
\end{proof}

The constant monomial is particularly important. Setting $n=0$ in
\eqref{eq:fundamental-monomial-identity} gives
\begin{equation}
\label{eq:U-derivative-constant}
\D_{\U}(1)
=
\U_0x^{-1}.
\end{equation}
Since
\begin{equation}
\label{eq:U0-Binet-coefficients}
\U_0
=
\sum_{j=1}^{k}A_j,
\end{equation}
the action on constants records the total spectral weight of the Binet
representation.

\begin{corollary}[Polynomial invariance criterion]
\label{cor:polynomial-invariance}
The polynomial algebra $\mathbb K[x]$ is invariant under
$\D_{\U}$ if and only if $\U_0=0$

\end{corollary}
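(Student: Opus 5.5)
The plan is to read the criterion directly off the action of $\D_{\U}$ on polynomials given in Proposition~\ref{prop:U-derivative-polynomials}, splitting off the constant term as in \eqref{eq:U-derivative-polynomial-separated}. For $f(x)=\sum_{n=0}^{N}c_nx^n\in\mathbb K[x]$ we have
\[
\D_{\U}f(x)=c_0\U_0x^{-1}+\sum_{n=1}^{N}c_n\U_nx^{n-1}.
\]
The sum over $n\geq1$ is always a polynomial, since every exponent $n-1$ is nonnegative. Hence $\D_{\U}f\in\mathbb K[x]$ holds exactly when the Laurent monomial $c_0\U_0x^{-1}$ vanishes in $\mathbb K[x,x^{-1}]$.

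\emph{Sufficiency.} If $\U_0=0$, the term $c_0\U_0x^{-1}$ vanishes for every choice of $c_0$. Thus $\D_{\U}f\in\mathbb K[x]$ for every polynomial $f$, and $\mathbb K[x]$ is invariant.

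\emph{Necessity.} I would argue by contrapositive and test on the constant polynomial $f=1$. By \eqref{eq:U-derivative-constant}, $\D_{\U}(1)=\U_0x^{-1}$. If $\U_0\neq0$, this is a nonzero scalar multiple of $x^{-1}$. Because the monomials $x^m$, $m\in\mathbb Z$, form a basis of $\mathbb K[x,x^{-1}]$, the element $\U_0x^{-1}$ does not lie in $\mathbb K[x]$, which is the span of the $x^m$ with $m\geq0$. So invariance fails.

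The only step needing any care is this last one: it uses the fact that $x^{-1}\notin\mathbb K[x]$ inside the Laurent algebra, i.e.\ linear independence of the Laurent monomials. I would state this explicitly rather than appeal to degree arguments. One could also remark, via \eqref{eq:U0-Binet-coefficients}, that the condition $\U_0=0$ is equivalent to $\sum_{j}A_j=0$. This is the Ward regime mentioned in the introduction.
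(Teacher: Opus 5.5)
Your proof is correct and follows the paper's argument: sufficiency comes from the separated formula for $\D_{\U}f$ with $\U_0=0$, and necessity comes from testing on the constant polynomial $1$, where $\D_{\U}(1)=\U_0x^{-1}\in\mathbb K[x]$ forces $\U_0=0$. Your appeal to the linear independence of the Laurent monomials simply spells out the step the paper states as ``this is possible only if $\U_0=0$''.
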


\begin{proof}
Suppose first that $\U_0=0$. Then
\eqref{eq:U-derivative-polynomial-separated} becomes
\begin{equation}
\label{eq:U-derivative-polynomial-Ward-form}
\D_{\U}f(x)
=
\sum_{n=1}^{N}c_n\U_nx^{n-1},
\end{equation}
which belongs to $\mathbb K[x]$. Hence
$
\D_{\U}\bigl(\mathbb K[x]\bigr)
\subseteq
\mathbb K[x].
$

Conversely, assume
$
\D_{\U}\bigl(\mathbb K[x]\bigr)
\subseteq
\mathbb K[x].
$
Since $1\in\mathbb K[x]$, equation
\eqref{eq:U-derivative-constant} implies that
$
\U_0x^{-1}\in\mathbb K[x].
$
This is possible only if $\U_0=0$.
\end{proof}

Thus the single recurrence value $\U_0$ determines whether the natural
polynomial domain is preserved by the $\U$-derivative. The cases
$\U_0=0$ and $\U_0\neq0$ will be distinguished systematically later;
for the moment, the essential point is that this dichotomy follows directly
from the spectral construction rather than from an additional assumption.

Finally, the fundamental identity shows that the operator retains the entire
recurrence sequence.

\begin{proposition}[Recovery of the recurrence sequence]
\label{prop:recovery-recurrence-from-DU}
The action of $\D_{\U}$ on the monomial basis uniquely determines
$\U_0,\U_1,\U_2,\ldots$.
\end{proposition}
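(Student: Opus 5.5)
The plan is to argue exactly as in Corollary~\ref{cor:Binet-operator-recovery}, with Theorem~\ref{thm:fundamental-monomial-identity} in place of Proposition~\ref{prop:Binet-monomial-action}. Fix $n\geq0$. By Theorem~\ref{thm:fundamental-monomial-identity}, the image $\D_{\U}x^n$ is the Laurent monomial $\U_nx^{n-1}$ in $\mathbb K[x,x^{-1}]$. The monomials $\{x^m\}_{m\in\mathbb Z}$ form a basis of $\mathbb K[x,x^{-1}]$, so every element has uniquely determined coordinates. Hence the scalar $\U_n$ is uniquely determined as the coefficient of $x^{n-1}$ in $\D_{\U}x^n$.

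Concretely, suppose two scalars $c,c'$ both satisfy $\D_{\U}x^n=cx^{n-1}=c'x^{n-1}$. Then $(c-c')x^{n-1}=0$ in $\mathbb K[x,x^{-1}]$. Since $x^{n-1}$ is a nonzero basis element, this forces $c=c'$. Letting $n$ run over all nonnegative integers recovers the whole sequence $\U_0,\U_1,\U_2,\ldots$.

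The only point needing care is $n=0$. There $\D_{\U}(1)=\U_0x^{-1}$ by \eqref{eq:U-derivative-constant}, which leaves $\mathbb K[x]$ when $\U_0\neq0$. So the comparison must take place in $\mathbb K[x,x^{-1}]$, the codomain fixed in \eqref{eq:U-derivative-polynomial-Laurent}, and not in $\mathbb K[x]$. In $\mathbb K[x,x^{-1}]$, $x^{-1}$ is a basis element and the same coefficient extraction applies. In the Ward case $\U_0=0$ the same reasoning just reads off the coefficient $0$. No other obstacle arises.
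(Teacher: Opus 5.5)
Your proposal is correct and follows the paper's proof: both use the fundamental monomial identity $\D_{\U}x^n=\U_nx^{n-1}$ to read off $\U_n$ as the coefficient of $x^{n-1}$ in $\D_{\U}x^n$, for each $n\geq0$. Your remark that the $n=0$ comparison must be made in $\mathbb K[x,x^{-1}]$ rather than $\mathbb K[x]$ makes explicit a point the paper leaves implicit.
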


\begin{proof}
For every $n\geq0$,
\begin{equation}
\label{eq:recovery-coefficient}
\D_{\U}x^n
=
\U_nx^{n-1}.
\end{equation}
Therefore $\U_n$ is precisely the coefficient of $x^{n-1}$ in
$\D_{\U}x^n$, and the complete sequence is recovered from the monomial
action of the operator.
\end{proof}

\begin{corollary}
\label{cor:equality-U-derivatives}
Let $\U=(\U_n)_{n\geq0}$ and $\mathrm{V}=(\mathrm{V}_n)_{n\geq0}$
be recurrence sequences for which the corresponding operators
$\D_{\U}$ and $\D_{\mathrm{V}}$ are defined. If
\begin{equation}
\label{eq:equality-operators-monomials}
\D_{\U}x^n
=
\D_{\mathrm{V}}x^n
\qquad
\text{for every }n\geq0,
\end{equation}
then
\begin{equation}
\label{eq:equality-recurrence-sequences}
\U_n=\mathrm{V}_n
\qquad
\text{for every }n\geq0.
\end{equation}
\end{corollary}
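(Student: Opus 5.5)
The plan is to reduce the statement directly to Theorem~\ref{thm:fundamental-monomial-identity} applied to both sequences, and then to compare coefficients in the Laurent polynomial ring $\mathbb K[x,x^{-1}]$. This is essentially Proposition~\ref{prop:recovery-recurrence-from-DU} applied to two operators at once.

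First, I fix $n\geq0$ and apply the fundamental monomial identity to each sequence separately. Since both $\U$ and $\mathrm{V}$ admit distinct-root Binet representations, possibly with different roots and coefficients, this gives
\[
\D_{\U}x^n=\U_nx^{n-1},
\qquad
\D_{\mathrm{V}}x^n=\mathrm{V}_nx^{n-1}.
\]
By hypothesis \eqref{eq:equality-operators-monomials}, the two right-hand sides agree, so
\[
(\U_n-\mathrm{V}_n)\,x^{n-1}=0
\quad\text{in }\mathbb K[x,x^{-1}].
\]

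Second, I use that $x^{n-1}$ is a basis element of $\mathbb K[x,x^{-1}]$, including the case $n=0$, where it is $x^{-1}$. A scalar multiple of a basis element vanishes only when the scalar is zero, so $\U_n=\mathrm{V}_n$. Equivalently, by Proposition~\ref{prop:recovery-recurrence-from-DU}, $\U_n$ is the coefficient of $x^{n-1}$ in $\D_{\U}x^n$, and equal Laurent polynomials have equal coefficients.

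Since $n$ was arbitrary, this gives \eqref{eq:equality-recurrence-sequences}. No step is a genuine obstacle. The only point needing care is that the comparison must take place in $\mathbb K[x,x^{-1}]$ rather than in $\mathbb K[x]$, so that the case $n=0$ is covered even when $\U_0\neq0$ or $\mathrm{V}_0\neq0$.
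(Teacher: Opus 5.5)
Your proposal is correct and follows the paper's route. The paper gives no separate proof and treats the corollary as an immediate consequence of Proposition~\ref{prop:recovery-recurrence-from-DU}: by Theorem~\ref{thm:fundamental-monomial-identity}, $\U_n$ and $\mathrm{V}_n$ are each the coefficient of $x^{n-1}$ in the common Laurent polynomial $\D_{\U}x^n=\D_{\mathrm{V}}x^n$, and your remark that the comparison must take place in $\mathbb K[x,x^{-1}]$ to cover $n=0$ makes explicit a detail the paper leaves implicit.
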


The spectral Binet operator is therefore a degree-preserving encoding of the
recurrence sequence, while the $\U$-derivative is its degree-lowering
counterpart. 

\section{Ward and non-Ward behaviour}

The first structural distinction associated with a recurrence sequence is determined by the distinguished value \(\U_0\). Recall that 
\begin{equation} \label{eq:Du-on-constant-recall} 
\Du(1) = \U_0x^{-1}. 
\end{equation} 
Consequently, the action of the operator on the constant monomial is completely determined by the initial value of the recurrence sequence. 

\begin{definition}[Ward regime]\label{def:ward-regime} 
A recurrence sequence \(\U\) is said to belong to the \emph{Ward regime} if 
\begin{equation} \label{eq:ward-regime} 
\U_0=0. 
\end{equation} 
\end{definition} 

\begin{definition}[Non-Ward regime] \label{def:nonward-regime} 
A recurrence sequence \(\U\) is said to belong to the \emph{non-Ward regime} if \begin{equation} \label{eq:nonward-regime} \U_0\neq0. \end{equation} 
\end{definition} 

The distinction between these two regimes is intrinsically operatorial. It does not depend on the recurrence order or the characteristic roots. It is determined solely by the behavior of the operator at degree zero. 

\begin{proposition}[Operator characterization of the Ward regime] \label{prop:ward-characterization} 
The following statements are equivalent: 
\begin{enumerate} 
\item \(\U_0=0\); 
\item \(\Du(1)=0\); 
\item \(\mathbb K[x]\) is invariant under \(\Du\). 
\end{enumerate} 
\end{proposition}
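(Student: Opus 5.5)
The plan is to prove the cycle of implications (1)$\Rightarrow$(2)$\Rightarrow$(3)$\Rightarrow$(1), relying almost entirely on the constant-monomial identity \eqref{eq:Du-on-constant-recall}, namely $\Du(1)=\U_0x^{-1}$, which is the case $n=0$ of Theorem~\ref{thm:fundamental-monomial-identity}, together with Corollary~\ref{cor:polynomial-invariance}.

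For (1)$\Rightarrow$(2), substituting $\U_0=0$ into \eqref{eq:Du-on-constant-recall} gives $\Du(1)=0$ at once. For (2)$\Rightarrow$(3), I will argue as follows. If $\Du(1)=0$, then $\U_0x^{-1}=0$ in $\mathbb K[x,x^{-1}]$. Since $x^{-1}$ is a nonzero element of the Laurent polynomial ring, this forces $\U_0=0$. Then the separated form \eqref{eq:U-derivative-polynomial-separated} of Proposition~\ref{prop:U-derivative-polynomials} loses its $x^{-1}$ term, so $\Du f\in\mathbb K[x]$ for every polynomial $f$. Equivalently, one can invoke the ``if'' direction of Corollary~\ref{cor:polynomial-invariance}.

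For (3)$\Rightarrow$(1), I apply the ``only if'' direction of Corollary~\ref{cor:polynomial-invariance}. Concretely, since $1\in\mathbb K[x]$, invariance gives $\U_0x^{-1}\in\mathbb K[x]$. Because a nonzero scalar multiple of $x^{-1}$ is not a polynomial (compare monomial supports in $\mathbb K[x,x^{-1}]$), it follows that $\U_0=0$.

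No step presents a genuine obstacle. The only point requiring a little care is the justification that $cx^{-1}=0$ or $cx^{-1}\in\mathbb K[x]$ forces $c=0$. I will handle this by working inside $\mathbb K[x,x^{-1}]$, where $\{x^m\}_{m\in\mathbb Z}$ is a basis, so that the coefficient of $x^{-1}$ is well defined and must vanish.
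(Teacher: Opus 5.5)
Your proposal is correct and matches the paper's proof: the paper likewise obtains (1)$\Leftrightarrow$(2) directly from $\Du(1)=\U_0x^{-1}$ and (1)$\Leftrightarrow$(3) from Corollary~\ref{cor:polynomial-invariance}, and your cycle simply routes (2)$\Rightarrow$(3) through (1). Your remark that the coefficient of $x^{-1}$ is well defined in $\mathbb K[x,x^{-1}]$ makes explicit a point the paper leaves implicit.
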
 
\begin{proof} 
The equivalence between (1) and (2) follows immediately from \eqref{eq:Du-on-constant-recall}. The equivalence between (1) and (3) was established in Corollary~\ref{cor:polynomial-invariance}. Indeed, \[ \Du(\mathbb K[x]) \subseteq \mathbb K[x] \] holds precisely when \(\U_0=0. \) 
\end{proof} 

From the operator-theoretic point of view, the regimes may be summarized as follows: 
\[ 
\U_0=0 \quad\Longleftrightarrow\quad \Du(1)=0 \quad\Longleftrightarrow\quad \Du(\mathbb K[x]) \subseteq \mathbb K[x], 
\] 
whereas 
\[ 
\U_0\neq0 \quad\Longleftrightarrow\quad \Du(1)=\U_0x^{-1} \quad\Longrightarrow\quad \Du(\mathbb K[x]) \subseteq \mathbb K[x,x^{-1}]. \] 
This dichotomy determines whether degree zero acts as a terminal level of the lowering dynamics or as a transition point toward negative powers.

\section{Algebraic structure and iteration}\label{sec:algebraic-structure-iteration}

The fundamental monomial identity established in the preceding section,
\begin{equation}
\label{eq:fundamental-identity-iteration}
\Du x^n
=
\U_nx^{n-1},
\qquad n\geq0,
\end{equation}
Already determines a substantial part of the algebraic structure of
$\Du$. In particular, repeated application of the operator produces
products of consecutive recurrence values. This section develops the
resulting iteration theory and identifies the algebraic mechanism by which vanishing recurrence values interrupt the lowering process.

\subsection{Iterates on monomials}

The first few iterates of a monomial illustrate the general pattern. From
\eqref{eq:fundamental-identity-iteration},
\begin{align}
\Du x^n
&=
\U_nx^{n-1},\\
\Du^2x^n
&=
\U_n\U_{n-1}x^{n-2},\\
\Du^3x^n
&=
\U_n\U_{n-1}\U_{n-2}x^{n-3}.
\end{align}
The general formula follows by induction.
\begin{theorem}[Iterates on monomials]
\label{thm:iterates-monomials}
Let $n\geq 0$, and let $m$ be an integer such that
$
0\leq m\leq n.
$
Then
\begin{equation}
\label{eq:iterates-monomials}
\Du^{m}x^n
=
\left(
\prod_{j=0}^{m-1}\U_{n-j}
\right)x^{n-m}.
\end{equation}
For $m=0$, the product in \eqref{eq:iterates-monomials} is understood
to be the empty product and hence equal to $1$.
\end{theorem}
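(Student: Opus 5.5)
The plan is induction on $m$ with $n$ fixed, using only the fundamental monomial identity (Theorem~\ref{thm:fundamental-monomial-identity}) and the linearity of $\Du$ (Proposition~\ref{prop:U-derivative-linearity}). For $m=0$ both sides equal $x^n$, because $\Du^0$ is the identity and the empty product is $1$. For $m=1$ the statement is exactly \eqref{eq:fundamental-identity-iteration}.

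For the inductive step, assume the formula holds for some $m$ with $0\le m<n$. Then
\[
\Du^{m+1}x^n=\Du\bigl(\Du^m x^n\bigr)=\Du\Bigl(\Bigl(\prod_{j=0}^{m-1}\U_{n-j}\Bigr)x^{n-m}\Bigr).
\]
The product is a scalar, so linearity lets us pull it outside $\Du$. Since $n-m\ge 1\ge0$, the monomial identity applies to $x^{n-m}$ and gives $\U_{n-m}x^{n-m-1}$. Absorbing $\U_{n-m}$ into the product as the factor with index $j=m$ yields $\bigl(\prod_{j=0}^{m}\U_{n-j}\bigr)x^{n-(m+1)}$, which is the claim for $m+1$.

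The only point that needs care is domain bookkeeping. Each intermediate iterate $\Du^m x^n$ with $m\le n$ is a scalar multiple of $x^{n-m}$ with $n-m\ge 0$, so it lies in $\mathbb K[x]$. Hence $\Du$ is applied only to polynomials, where it is defined by Definition~\ref{def:U-derivative}, and this holds in the Ward and non-Ward regimes alike. The restriction $m\le n$ keeps us away from applying $\Du$ to $x^{-1}$, where the monomial identity was not established. The argument also remains valid if some $\U_{n-j}$ vanishes: the product is then zero and every later iterate is zero, which is consistent with the formula. I expect no genuine obstacle beyond checking these index conventions.
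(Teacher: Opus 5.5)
Your proposal is correct and follows the paper's proof essentially verbatim. Both argue by induction on $m$ with the empty-product base case, pull the scalar product outside $\Du$ by linearity, and apply the fundamental monomial identity to $x^{n-m}$ with $n-m\ge 1$. Your remark on domain bookkeeping is a useful clarification that the paper leaves implicit: every intermediate iterate stays in $\mathbb K[x]$, so the argument works in both the Ward and non-Ward regimes.
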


\begin{proof}
We proceed by induction on $m$. For $m=0$, the empty-product convention gives
\begin{equation}
\Du^{0}x^n
=
x^n
=
\left(
\prod_{j=0}^{-1}\U_{n-j}
\right)x^n,
\end{equation}
so \eqref{eq:iterates-monomials} holds. For $m=1$, the formula reduces to $\Du x^n=\U_nx^{n-1}$, which is precisely the fundamental monomial identity. Now suppose that, for some integer $m$ with $1\leq m<n$,
\begin{equation}
\label{eq:iteration-induction-hypothesis}
\Du^{m}x^n
=
\left(
\prod_{j=0}^{m-1}\U_{n-j}
\right)x^{n-m}.
\end{equation}
Applying $\Du$ to both sides of
\eqref{eq:iteration-induction-hypothesis} and using linearity, we obtain
\begin{align*}
\Du^{m+1}x^n
=
\Du
\left[
\left(
\prod_{j=0}^{m-1}\U_{n-j}
\right)x^{n-m}
\right]
=
\left(
\prod_{j=0}^{m-1}\U_{n-j}
\right)
\Du x^{n-m}.
\end{align*}
Since $m<n$, we have $n-m\geq1$, and the fundamental monomial
identity gives
\begin{equation}
\Du x^{n-m}
=
\U_{n-m}x^{n-m-1}.
\end{equation}
Therefore,
\begin{align*}
\Du^{m+1}x^n
=
\left(
\prod_{j=0}^{m-1}\U_{n-j}
\right)
\U_{n-m}x^{n-m-1}
=
\left(
\prod_{j=0}^{m}\U_{n-j}
\right)
x^{n-(m+1)}.
\end{align*}
This is exactly \eqref{eq:iterates-monomials} with $m$ replaced by
$m+1$. Hence the result follows by induction for every
$0\leq m\leq n$.
\end{proof}

\begin{corollary}[Complete monomial descent]\label{cor:complete-monomial-descent}
For every $n\geq1$, 
\begin{equation} 
\Du^n x^n = \U_n\U_{n-1}\cdots\U_1. 
\end{equation} 
\end{corollary}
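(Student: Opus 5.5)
The plan is to specialise Theorem~\ref{thm:iterates-monomials} to the extreme admissible value $m=n$. Fix $n\geq1$. The theorem is stated for every integer $m$ with $0\leq m\leq n$, so $m=n$ is allowed. Substituting gives
\[
\Du^{n}x^n=\left(\prod_{j=0}^{n-1}\U_{n-j}\right)x^{0}.
\]
Reindexing with $i=n-j$, the index runs from $i=n$ down to $i=1$. The product is therefore $\U_n\U_{n-1}\cdots\U_1$, and $x^0=1$, which is exactly the claimed identity. Note that $\U_0$ never appears, so the statement holds in both the Ward and non-Ward regimes.

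The one point I would check carefully is whether the inductive proof of Theorem~\ref{thm:iterates-monomials} really reaches $m=n$. The induction step assumes $1\leq m<n$ and produces the formula for $m+1\leq n$, so the case $m=n$ is indeed covered. The step $m\to m+1$ applies the fundamental monomial identity (Theorem~\ref{thm:fundamental-monomial-identity}) to $x^{n-m}$ with $n-m\geq1$. That identity is valid for every exponent in any case, so no extra restriction arises.

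If one prefers a self-contained argument, I would induct directly on $n$. The base case is $\Du x=\U_1$, by Theorem~\ref{thm:fundamental-monomial-identity}. For the step, write $\Du^{n+1}x^{n+1}=\Du^{n}(\Du x^{n+1})=\U_{n+1}\Du^{n}x^{n}$, using linearity (Proposition~\ref{prop:U-derivative-linearity}), and then apply the inductive hypothesis. There is no genuine obstacle here; the only care needed is the empty-product and indexing conventions.
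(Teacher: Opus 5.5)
Your proposal is correct and matches the paper's intended argument. The corollary is stated without proof directly after Theorem~\ref{thm:iterates-monomials}, and it follows by setting $m=n$ and reindexing the product, exactly as you do.
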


\begin{corollary}[Termination degree zero]\label{cor:termination-degree-zero}
Suppose $\U_0=0.$ Then, for every $n\geq0$, 
$\Du^{n+1}x^n=0.$ Consequently, $\Du^m x^n=0,$ for $m>n. $ 
\end{corollary}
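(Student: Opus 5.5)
The plan is to reduce the claim to Corollary~\ref{cor:complete-monomial-descent} together with the Ward condition $\Du(1)=0$ from Proposition~\ref{prop:ward-characterization}.

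First take $n=0$. Here $\Du^{1}x^0=\Du(1)=\U_0x^{-1}=0$ directly by \eqref{eq:Du-on-constant-recall}.

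Now let $n\geq1$. Corollary~\ref{cor:complete-monomial-descent}, which is Theorem~\ref{thm:iterates-monomials} with $m=n$, gives
\[
\Du^n x^n=\U_n\U_{n-1}\cdots\U_1,
\]
a constant times the monomial $x^0=1$. Apply $\Du$ once more. By linearity (Proposition~\ref{prop:U-derivative-linearity}) and the fundamental monomial identity at $n=0$,
\[
\Du^{n+1}x^n=\Bigl(\prod_{j=1}^{n}\U_j\Bigr)\Du(1)=\Bigl(\prod_{j=1}^{n}\U_j\Bigr)\U_0x^{-1}=0,
\]
since $\U_0=0$. Equivalently, this is the product $\prod_{j=0}^{n}\U_{n-j}$ that would appear in \eqref{eq:iterates-monomials} at $m=n+1$, and that product contains the factor $\U_0$.

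For the second claim, let $m>n$ and write $m=(n+1)+r$ with $r\geq0$. Then
\[
\Du^m x^n=\Du^{r}\bigl(\Du^{n+1}x^n\bigr)=\Du^{r}(0)=0,
\]
because a linear operator sends $0$ to $0$. This holds on the domain $\mathbb K[x]$, which is invariant under $\Du$ in the Ward regime by Corollary~\ref{cor:polynomial-invariance}, so the iterates are well defined.

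There is no substantial obstacle. The only point needing care is that Theorem~\ref{thm:iterates-monomials} is stated only for $m\leq n$. The step from $m=n$ to $m=n+1$ therefore has to be done by hand, as above, using $\Du(1)=\U_0x^{-1}$, rather than by quoting the theorem beyond its stated range.
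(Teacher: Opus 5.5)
Your proof is correct and is essentially the paper's own argument: the paper states this corollary without proof right after the complete monomial descent, so the intended reasoning is to apply $\Du$ once more to $\Du^n x^n=\U_n\cdots\U_1$, which picks up the factor $\U_0=0$, and then use linearity for $m>n$. Treating $n=0$ separately and doing the step from $m=n$ to $m=n+1$ by hand, since Theorem~\ref{thm:iterates-monomials} is only stated for $m\le n$, is appropriate care.
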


\subsection{Iterates on polynomials and local nilpotence}

The monomial iteration formula extends naturally to arbitrary 
polynomials. In the Ward regime, the operator is locally nilpotent 
and its iterates admit an explicit description in terms of consecutive values of the recurrence sequence. 

\begin{proposition}[Iterates on polynomials] \label{prop:iterates-polynomials} 
Assume that $\U_0=0$. Let $f(x) = \sum_{n=0}^{N}c_nx^n$. 
Then, for every integer \(m\) satisfying \( 0\leq m\leq N, \) we have 
\begin{equation} \label{eq:iterates-polynomials} 
\Du^{m}f(x) = \sum_{n=m}^{N} c_n \left( \prod_{j=0}^{m-1}\U_{n-j} \right) x^{n-m}. 
\end{equation} 
For \(m=0\), the product is understood to be empty and equal to \(1\). 
\end{proposition}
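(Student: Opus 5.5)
The plan is to reduce the statement to the monomial case by linearity and then handle the low-degree terms separately. Since $\Du$ is linear (Proposition~\ref{prop:U-derivative-linearity}), so is every iterate $\Du^m$, and we may write
\[
\Du^m f(x)=\sum_{n=0}^{N}c_n\,\Du^m x^n .
\]
We then split the index range into $n\geq m$ and $n<m$.

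For the terms with $n\geq m$, the hypothesis $0\leq m\leq n$ of Theorem~\ref{thm:iterates-monomials} is satisfied. That theorem gives
\[
\Du^m x^n=\Bigl(\prod_{j=0}^{m-1}\U_{n-j}\Bigr)x^{n-m},
\]
which is exactly the summand appearing in \eqref{eq:iterates-polynomials}.

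For the terms with $0\leq n<m$ (an empty range when $m=0$), we have $m>n$. By Corollary~\ref{cor:termination-degree-zero}, which uses the standing assumption $\U_0=0$, $\Du^m x^n=0$. These terms therefore drop out, and summing over the two ranges yields \eqref{eq:iterates-polynomials}. The case $m=0$ is trivial: $\Du^0$ is the identity and the product is empty.

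The only point that needs care is the justification of Corollary~\ref{cor:termination-degree-zero} itself, i.e.\ that $\Du^{m}x^n=0$ for $m>n$. This is where the Ward hypothesis is essential. Without $\U_0=0$, the iterate $\Du^{n+1}x^n$ equals $\bigl(\prod_{j=1}^{n}\U_j\bigr)\U_0x^{-1}$, which need not vanish. Moreover, $\Du$ would then be applied to negative powers, which lie outside $\mathbb K[x]$ and for which the monomial formula was not stated.

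To make this step self-contained, we apply Theorem~\ref{thm:iterates-monomials} with $m=n$ to obtain $\Du^n x^n=\U_n\cdots\U_1$, a constant. We then apply $\Du$ once more: by \eqref{eq:U-derivative-constant}, $\Du(1)=\U_0x^{-1}=0$. All further iterates are $\Du$ applied to $0$, hence $0$ by linearity. This confirms $\Du^m x^n=0$ for every $m>n$ and completes the plan.
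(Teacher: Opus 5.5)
Your proposal is correct and follows essentially the argument the paper intends. The paper states this proposition without a written proof, as a direct consequence of linearity, Theorem~\ref{thm:iterates-monomials} and Corollary~\ref{cor:termination-degree-zero}, which is the same pattern it uses to prove Theorem~\ref{thm:local-nilpotence}; your explicit check that $\Du^{m}x^n=0$ for $m>n$, via $\Du^n x^n=\U_n\cdots\U_1$ and $\Du(1)=\U_0x^{-1}=0$, correctly supplies the termination step the paper leaves unjustified.
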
 

The preceding formula shows that each application of \(\Du\) lowers the degree of every surviving monomial component by one. 

\begin{corollary}[Degree-lowering property] 
\label{cor:degree-lowering} 
Assume that \(\U_0=0\), and let \( f\in\mathbb K[x] \) satisfy \( \deg f\leq N. \) Then, for every \(m\) with \( 0\leq m\leq N, \) \begin{equation} \label{eq:degree-lowering} \deg\bigl(\Du^{m}f\bigr) \leq N-m, \end{equation} whenever \(\Du^{m}f\neq0\).
\end{corollary}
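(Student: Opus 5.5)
The claim follows directly from Proposition~\ref{prop:iterates-polynomials}, which applies because $\U_0=0$ is assumed. We write $f(x)=\sum_{n=0}^{N}c_nx^n$; this representation is available since $\deg f\leq N$, and coefficients above the true degree are simply zero. For $0\leq m\leq N$ the proposition gives
\begin{equation*}
\Du^{m}f(x)=\sum_{n=m}^{N}c_n\left(\prod_{j=0}^{m-1}\U_{n-j}\right)x^{n-m}.
\end{equation*}
Every monomial on the right has exponent $n-m$ with $m\leq n\leq N$. These exponents therefore lie in $\{0,1,\ldots,N-m\}$, and in particular all are nonnegative, so $\Du^{m}f\in\mathbb K[x]$.

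Next we read off the degree. If $\Du^{m}f\neq0$, then some coefficient $c_n\prod_{j=0}^{m-1}\U_{n-j}$ is nonzero. Its degree is then the largest exponent $n-m$ whose coefficient is nonzero, and this is at most $N-m$. The hypothesis $\Du^{m}f\neq0$ is needed only so that the degree is defined; the zero polynomial is excluded, or, if one prefers, covered by the convention $\deg 0=-\infty$.

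There is no substantial obstacle. The one point to make explicit is that $\U_0=0$ is what puts the polynomial formula of Proposition~\ref{prop:iterates-polynomials} in force. In the non-Ward regime, terms such as $c_0\U_0x^{-1}$ would appear, and the notion of polynomial degree would no longer apply in the same way. Vanishing values $\U_{n-j}$ with $n-j\geq1$ cause no difficulty: they can only remove terms, which never increases the degree.
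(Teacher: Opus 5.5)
Your proof is correct and matches the paper's approach. The paper gives no separate proof: it treats the corollary as an immediate consequence of Proposition~\ref{prop:iterates-polynomials}, remarking only that each application of $\Du$ lowers the degree of every surviving monomial component by one. That is exactly your observation that every exponent $n-m$ satisfies $0\leq n-m\leq N-m$.
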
 

For each integer \(N\geq 0\), define 
\begin{equation} \label{eq:PN-definition} 
\mathcal P_N = \operatorname{span}_{\mathbb K} \{1,x,\ldots,x^N\}. 
\end{equation} 
We now obtain the principal finiteness property of the Ward regime. 

\begin{theorem}[Local nilpotence] \label{thm:local-nilpotence} 
Assume that $\U_0=0$. Then \(\Du\) is locally nilpotent on \(\mathbb K[x]\). More precisely, for every \(N\geq0\), 
\begin{equation} \label{eq:nilpotence-on-PN} 
\Du^{N+1}\big|_{\mathcal P_N} = 0. \end{equation} Equivalently, if \(f\in\mathbb K[x] \) satisfies \( \deg f\leq N, \) then 
\begin{equation} \label{eq:local-nilpotence} 
\Du^{N+1}f = 0. 
\end{equation} 
\end{theorem}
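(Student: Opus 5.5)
The plan is to reduce the statement to the monomial case and then extend by linearity, using the results already established for iterates.

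Fix $N\geq0$. Since $\mathcal P_N$ is spanned by $1,x,\ldots,x^N$, and $\Du$ (hence every power $\Du^{N+1}$) is linear by Proposition~\ref{prop:U-derivative-linearity}, it suffices to prove $\Du^{N+1}x^n=0$ for each $0\leq n\leq N$. In the Ward regime $\U_0=0$, Proposition~\ref{prop:ward-characterization} gives $\Du(\mathbb K[x])\subseteq\mathbb K[x]$. Therefore all iterates of $\Du$ are defined on $\mathbb K[x]$ without leaving the polynomial algebra, so composition makes sense.

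For a fixed monomial $x^n$ with $n\leq N$, I would apply Corollary~\ref{cor:termination-degree-zero}, which gives $\Du^m x^n=0$ for all $m>n$. Since $N+1>N\geq n$, this yields $\Du^{N+1}x^n=0$. If one prefers to see the mechanism directly, Theorem~\ref{thm:iterates-monomials} with $m=n$ gives $\Du^n x^n=\U_n\cdots\U_1$, a constant. One further application produces a factor $\U_0=0$ by \eqref{eq:U-derivative-constant}, and after that $\Du$ acts on $0$ and returns $0$ by linearity. Summing over $n$ with coefficients $c_n$ then gives $\Du^{N+1}f=0$ for every $f$ with $\deg f\leq N$, which is \eqref{eq:local-nilpotence}, and equivalently \eqref{eq:nilpotence-on-PN}. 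Local nilpotence follows because every polynomial lies in some $\mathcal P_N$.

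The only delicate point is bookkeeping at the passage through degree zero. Theorem~\ref{thm:iterates-monomials} is stated only for $m\leq n$, so the step from $\Du^n x^n$ to $\Du^{n+1}x^n$ has to go through the constant case \eqref{eq:U-derivative-constant} rather than the product formula. That step, and the fact that the higher powers $\Du^{m}$ with $m>n+1$ act on the zero polynomial, is exactly what Corollary~\ref{cor:termination-degree-zero} encodes, so I would check it explicitly there.
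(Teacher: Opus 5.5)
Your proposal is correct and follows essentially the same route as the paper: you reduce to monomials $x^n$ with $n\le N$ by linearity, invoke Corollary~\ref{cor:termination-degree-zero} to get $\Du^{N+1}x^n=0$ since $N+1\geq n+1$, and conclude local nilpotence because every polynomial lies in some $\mathcal P_N$. Your extra check that $\Du(\mathbb K[x])\subseteq\mathbb K[x]$ in the Ward regime, so that the iterates stay in the polynomial algebra, is a sound and slightly more careful addition that the paper leaves implicit.
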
 
\begin{proof} 
Let \[ f(x) = \sum_{n=0}^{N}c_nx^n. \] For every monomial appearing in \(f\), Corollary~\ref{cor:termination-degree-zero} gives \( \Du^{n+1}x^n=0. \) Since \(N+1\geq n+1, \) it follows that \( \Du^{N+1}x^n=0\), \(0\leq n\leq N. \) Therefore 
\begin{align} 
\Du^{N+1}f &= \sum_{n=0}^{N} c_n \Du^{N+1}x^n= 0. 
\end{align} Hence \[ \Du^{N+1}\big|_{\mathcal P_N} = 0. \] Since every polynomial belongs to some finite-dimensional space \(\mathcal P_N\), the operator \(\Du\) is locally nilpotent on \(\mathbb K[x]\). \end{proof}

\section{Spectral product rule}\label{sec:product-rule}

\subsection{Spectral increments}

The \(U\)-derivative is defined through simultaneous evaluation at the characteristic scales \[ \lambda_1 x,\ldots,\lambda_k x. \] Consequently, its interaction with multiplication is controlled by the entire characteristic spectrum rather than by evaluation at a single point. Throughout this section, let \[ r,s\in\{1,\ldots,k\}, \qquad r\neq s. \] The characteristic roots \(\lambda_r\) and \(\lambda_s\) will serve as reference roots for the two factors appearing in a product. 

\begin{definition}[Spectral increments] \label{def:spectral-increments} 
For every index \(j\in\{1,\ldots,k\}\), define the spectral increment of a function \(h\) relative to \(\lambda_r\) by 
\begin{equation} \label{eq:spectral-increment-f} 
\Delta_{j,r}h = T_{\lambda_j}h - T_{\lambda_r}h. 
\end{equation} 
\end{definition} 

The increments measure the deviation of a given spectral component from the corresponding reference component. \begin{remark} \label{rem:vanishing-spectral-increments} Two increments vanish identically: \begin{equation} \label{eq:vanishing-increments} \Delta_{r,r}f=0, \qquad \Delta_{s,s}g=0. \end{equation} \end{remark} Since the indices \(r\) and \(s\) play a distinguished role, it is convenient to isolate the remaining characteristic channels. \begin{definition} \label{def:Jrs} For distinct indices \(r\) and \(s\), define \begin{equation} \label{eq:Jrs} J_{r,s} = \{1,\ldots,k\} \setminus \{r,s\}. \end{equation} \end{definition} Since \(r\neq s\), \begin{equation} \label{eq:Jrs-cardinality} |J_{r,s}|=k-2. \end{equation} The set \(J_{r,s}\) records precisely those characteristic modes that remain after the two reference roots have been selected. As we shall see in the next subsection, these remaining modes are responsible for the higher-order spectral interactions appearing in the product rule.

\subsection{The General Spectral Product Rule}

We now derive the fundamental multiplicative identity satisfied by the \(U\)-derivative.

\begin{theorem}[Spectral product rule] \label{thm:spectral-product-rule} 
Let \[ r,s\in\{1,\ldots,k\}, \qquad r\neq s. \] Then, for every admissible pair of functions \(f\) and \(g\), 
\begin{align} \label{eq:spectral-product-rule} 
\Du(fg) ={}& T_{\lambda_r}f\,\Du g + T_{\lambda_s}g\,\Du f + \frac1x \sum_{j\in J_{r,s}} A_j \Delta_{j,r}f \, \Delta_{j,s}g - \frac{\U_0}{x} T_{\lambda_r}f T_{\lambda_s}g.
\end{align} 
\end{theorem}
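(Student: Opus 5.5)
The plan is to expand the definition of $\Du$ on a product directly and rewrite each dilated factor as the reference value plus a spectral increment. By Definition~\ref{def:U-derivative},
\[
\Du(fg)(x)=\frac1x\sum_{j=1}^{k}A_j\,T_{\lambda_j}f\;T_{\lambda_j}g ,
\]
using that each dilation is multiplicative: $T_{\lambda}(fg)=T_\lambda f\,T_\lambda g$. For each $j$, Definition~\ref{def:spectral-increments} gives
\[
T_{\lambda_j}f=T_{\lambda_r}f+\Delta_{j,r}f,\qquad T_{\lambda_j}g=T_{\lambda_s}g+\Delta_{j,s}g .
\]
Multiplying these splits each summand into four pieces:
\[
T_{\lambda_r}f\,T_{\lambda_s}g,\qquad T_{\lambda_r}f\,\Delta_{j,s}g,\qquad T_{\lambda_s}g\,\Delta_{j,r}f,\qquad \Delta_{j,r}f\,\Delta_{j,s}g .
\]

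Next I will sum each type of piece against $A_j/x$.

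\begin{enumerate}
\item \emph{Pure reference term.} Since $T_{\lambda_r}f\,T_{\lambda_s}g$ does not depend on $j$, \eqref{eq:U0-Binet-coefficients} gives $\frac{\U_0}{x}T_{\lambda_r}f\,T_{\lambda_s}g$.
\item \emph{Term linear in $\Delta_{j,s}g$.} Pull out $T_{\lambda_r}f$ and write
\[
\frac1x\sum_j A_j\Delta_{j,s}g=\frac1x\sum_jA_jT_{\lambda_j}g-\frac1x\Big(\sum_jA_j\Big)T_{\lambda_s}g=\Du g-\frac{\U_0}{x}T_{\lambda_s}g .
\]
This contributes $T_{\lambda_r}f\,\Du g-\frac{\U_0}{x}T_{\lambda_r}f\,T_{\lambda_s}g$.
\item \emph{Term linear in $\Delta_{j,r}f$.} Symmetrically, this contributes $T_{\lambda_s}g\,\Du f-\frac{\U_0}{x}T_{\lambda_r}f\,T_{\lambda_s}g$.
\item \emph{Quadratic term.} By Remark~\ref{rem:vanishing-spectral-increments}, the summand with $j=r$ vanishes because $\Delta_{r,r}f=0$. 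The summand with $j=s$ vanishes because $\Delta_{s,s}g=0$. So only indices in $J_{r,s}$ survive, which gives exactly the residual sum in \eqref{eq:spectral-product-rule}.
\end{enumerate}

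Finally I collect the $\U_0$-terms. There is $+1$ copy from the pure reference term and $-1$ copy from each linear term, leaving a net $-\frac{\U_0}{x}T_{\lambda_r}f\,T_{\lambda_s}g$. This is the last term of \eqref{eq:spectral-product-rule}.

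No step is a genuine obstacle; the computation is purely algebraic. The only points needing care are the following:
\begin{enumerate}
\item Here ``admissible'' means $f,g$ lie in a space stable under all the $T_{\lambda_j}$ and closed under products, so all expressions are defined; polynomials or Laurent polynomials suffice.
\item The bookkeeping of the three $\U_0$-contributions must be done correctly, since that is where the non-Ward correction arises.
\item The hypothesis $r\neq s$ is what makes both the $j=r$ and $j=s$ terms drop out of the quadratic sum, leaving exactly $k-2$ residual channels.
\end{enumerate}
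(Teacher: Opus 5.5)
Your proof is correct and is essentially the paper's argument. It uses the same four-term expansion, the identities $\sum_j A_j=\U_0$ and $\sum_j A_jT_{\lambda_j}h=x\Du h$, and the vanishing of the $j=r$ and $j=s$ quadratic terms; the only difference is that the paper expands the auxiliary sum $S_{r,s}(f,g)=\sum_{j=1}^{k}A_j\Delta_{j,r}f\,\Delta_{j,s}g$ and then solves for $\Du(fg)$, whereas you substitute $T_{\lambda_j}f=T_{\lambda_r}f+\Delta_{j,r}f$ and $T_{\lambda_j}g=T_{\lambda_s}g+\Delta_{j,s}g$ directly into $\Du(fg)$.
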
 
\begin{proof} 
Define 
\begin{equation} \label{eq:auxiliary-sum-product-rule} 
S_{r,s}(f,g) = \sum_{j=1}^{k} A_j \Delta_{j,r}f \Delta_{j,s}g . 
\end{equation} 
Using the definitions of the spectral increments, 
\begin{align} 
S_{r,s}(f,g) = \sum_{j=1}^{k} A_j \bigl( T_{\lambda_j}f - T_{\lambda_r}f \bigr) \bigl( T_{\lambda_j}g - T_{\lambda_s}g \bigr). \label{eq:expanded-product-rule} 
\end{align} 
Expanding the product gives 
\begin{align} 
S_{r,s}(f,g) ={}& \sum_{j=1}^{k} A_j T_{\lambda_j}f T_{\lambda_j}g  - T_{\lambda_s}g \sum_{j=1}^{k} A_j T_{\lambda_j}f  - T_{\lambda_r}f \sum_{j=1}^{k} A_j T_{\lambda_j}g + T_{\lambda_r}f T_{\lambda_s}g \sum_{j=1}^{k} A_j . \label{eq:expanded-Srs} 
\end{align} 
Since $\sum_{j=1}^{k} A_jT_{\lambda_j}h = x\,\Du h, $ for every admissible function \(h\), and $ \sum_{j=1}^{k} A_j = \U_0, $ equation \eqref{eq:expanded-Srs} becomes \begin{align} 
S_{r,s}(f,g) = x\Du(fg) - xT_{\lambda_s}g\,\Du f - xT_{\lambda_r}f\,\Du g + \U_0 T_{\lambda_r}f T_{\lambda_s}g . \end{align} Solving for \(\Du(fg)\) yields 
\begin{align} 
\Du(fg) = T_{\lambda_r}f\,\Du g + T_{\lambda_s}g\,\Du f + \frac1x S_{r,s}(f,g) - \frac{\U_0}{x} T_{\lambda_r}f T_{\lambda_s}g . \label{eq:before-Jrs-reduction} 
\end{align} 
Now observe that \[ \Delta_{r,r}f=0, \qquad \Delta_{s,s}g=0. \] Hence the terms corresponding to \(j=r\) and \(j=s\) vanish identically in \eqref{eq:auxiliary-sum-product-rule}. Therefore \[ S_{r,s}(f,g) = \sum_{j\in J_{r,s}} A_j \Delta_{j,r}f \Delta_{j,s}g . \] Substituting this expression into \eqref{eq:before-Jrs-reduction} gives \eqref{eq:spectral-product-rule}. 
\end{proof}

\begin{proposition}[Ward form of the product rule] \label{prop:ward-product-rule} 
Assume that \(\U_0=0. \) Then 
\begin{align} \label{eq:ward-product-rule} 
\Du(fg) ={}& T_{\lambda_r}f\,\Du g + T_{\lambda_s}g\,\Du f + \frac1x \sum_{j\in J_{r,s}} A_j \Delta_{j,r}f \Delta_{j,s}g . 
\end{align} 
\end{proposition}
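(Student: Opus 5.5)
This is a direct specialization of Theorem~\ref{thm:spectral-product-rule}. We fix distinct indices $r,s\in\{1,\ldots,k\}$ and admissible functions $f,g$, that is, functions on whose common domain all the dilations $T_{\lambda_1},\ldots,T_{\lambda_k}$ are defined. We then invoke the general spectral product rule \eqref{eq:spectral-product-rule}. That identity holds for every sequence $\U$ under the standing spectral hypothesis, with no assumption on $\U_0$, so it applies in particular to sequences in the Ward regime.

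The single step is to substitute $\U_0=0$ into \eqref{eq:spectral-product-rule}. The last term $-\frac{\U_0}{x}T_{\lambda_r}f\,T_{\lambda_s}g$ then vanishes identically, and what remains is exactly \eqref{eq:ward-product-rule}. If one prefers to see where the term disappears, the mechanism is the identity $\sum_{j=1}^kA_j=\U_0$ from \eqref{eq:U0-spectral-sum}. In the expansion \eqref{eq:expanded-Srs} of $S_{r,s}(f,g)$, the coefficient of the cross term $T_{\lambda_r}f\,T_{\lambda_s}g$ is precisely this sum, so it is zero in the Ward regime. The same cancellation also leaves the reduction of $S_{r,s}$ to the index set $J_{r,s}$ unaffected, since that reduction rests only on Remark~\ref{rem:vanishing-spectral-increments}.

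There is no genuine obstacle here. The only point needing care is the admissibility bookkeeping: each factor $T_{\lambda_r}f$, $\Du g$, $\Delta_{j,r}f$, and so on, must make sense on the same domain, and this is inherited verbatim from the hypotheses of Theorem~\ref{thm:spectral-product-rule}. It is also worth noting that when $k=2$ the set $J_{r,s}$ is empty. In that case the statement reduces to the twisted Leibniz rule $\Du(fg)=T_{\lambda_r}f\,\Du g+T_{\lambda_s}g\,\Du f$, which serves as a consistency check with the classical Fibonacci-type calculi.
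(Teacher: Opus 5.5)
Your proposal is correct and follows the same route as the paper: it substitutes $\U_0=0$ into Theorem~\ref{thm:spectral-product-rule}. This makes the term $-\frac{\U_0}{x}T_{\lambda_r}f\,T_{\lambda_s}g$ vanish and leaves exactly the stated Ward form.
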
 
\begin{proof} The result follows immediately from Theorem~\ref{thm:spectral-product-rule}, since the term \[ - \frac{\U_0}{x} T_{\lambda_r}f T_{\lambda_s}g \] vanishes identically when \(\U_0=0\). 
\end{proof} 
The non-Ward form of the product rule is exactly Theorem~\ref{thm:spectral-product-rule}. The non-Ward regime contains two contributions beyond the first-order terms. The first is the residual spectral interaction \[ \frac1x \sum_{j\in J_{r,s}} A_j \Delta_{j,r}f \Delta_{j,s}g, \] while the second is the explicit initial-value contribution \[ - \frac{\U_0}{x} T_{\lambda_r}f T_{\lambda_s}g. \] Consequently, the Ward and non-Ward regimes differ not only in the action of \(\Du\) on constants, but also in the structure of multiplication.

The spectral product rule will be analyzed in the following subsections. Its structure depends on three ingredients: the two reference spectral modes, the remaining characteristic channels, and the distinguished initial value \(\U_0\).

\subsection{Structure of the Product Rule}

The spectral product rule reveals that the multiplicative structure of \(\Du\) is governed by three distinct ingredients: 
\begin{enumerate} 
\item two first-order spectral contributions; 
\item interactions among the remaining characteristic modes; 
\item an initial-value contribution determined by \(\U_0\). 
\end{enumerate} 
These components originate from different pieces of the spectral data and play different roles in the subsequent theory. 
\begin{corollary}[Structural decomposition]\label{cor:product-rule-decomposition} 
The product rule \eqref{eq:spectral-product-rule} decomposes uniquely as 
\begin{equation} \label{eq:product-rule-decomposition} 
\Du(fg) = L_{r,s}(f,g) + I_{r,s}(f,g) + C_{r,s}(f,g), 
\end{equation} where 
\begin{align} 
L_{r,s}(f,g) &= T_{\lambda_r}f\,\Du g + T_{\lambda_s}g\,\Du f, \\ I_{r,s}(f,g) &= \frac1x \sum_{j\in J_{r,s}} A_j \Delta_{j,r}f \Delta_{j,s}g, \\ C_{r,s}(f,g) &= - \frac{\U_0}{x} T_{\lambda_r}f T_{\lambda_s}g. 
\end{align} 
\end{corollary}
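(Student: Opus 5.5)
The decomposition \eqref{eq:product-rule-decomposition} is essentially a regrouping of the four terms of Theorem~\ref{thm:spectral-product-rule}. The only real content is the word ``uniquely.'' I would therefore prove two things separately: \emph{existence}, which is immediate, and \emph{uniqueness}, which has to be given a precise meaning first.

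\textbf{Existence.} Define $L_{r,s}$, $I_{r,s}$ and $C_{r,s}$ by the stated formulas. The first two terms on the right of \eqref{eq:spectral-product-rule} are exactly $L_{r,s}(f,g)$. The residual sum over $J_{r,s}$ is $I_{r,s}(f,g)$, and the initial-value term is $C_{r,s}(f,g)$. Adding them reproduces \eqref{eq:spectral-product-rule}, so $\Du(fg)=L_{r,s}+I_{r,s}+C_{r,s}$ for every admissible pair.

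\textbf{Uniqueness.} An identity $a+b+c=a'+b'+c'$ of single functions cannot by itself force termwise equality. So I would read ``uniquely'' structurally: the decomposition is the unique splitting of $\Du(fg)$ into
\begin{enumerate}
\item a part linear in $(\Du f,\Du g)$, with coefficients given by the reference dilations $T_{\lambda_r}f$ and $T_{\lambda_s}g$;
\item a part supported on the residual channels $j\in J_{r,s}$;
\item a part proportional to the total spectral weight $\U_0=\sum_j A_j$.
\end{enumerate}
To justify this, I would regard the Binet coefficients $A_1,\ldots,A_k$ as independent formal parameters. Then $x\,\Du(fg)=\sum_j A_j\,T_{\lambda_j}f\,T_{\lambda_j}g$ is linear in $(A_1,\ldots,A_k)$, and each of $xL_{r,s}$, $xI_{r,s}$ and $xC_{r,s}$ is also linear in the $A_j$. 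The three pieces are then distinguished by which coefficients they involve, through which channels, and by their bilinear shape in the dilated values of $f$ and $g$.

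\textbf{Main obstacle.} The delicate part is making this uniqueness claim honest. The overlap in the $A_j$'s is real: $\U_0$ involves all the $A_j$, and $L_{r,s}$ also involves all of them. So I would prove a concrete statement. Suppose a decomposition has the same three shapes, namely
\begin{itemize}
\item a first term $\alpha\,T_{\lambda_r}f\,\Du g+\beta\,T_{\lambda_s}g\,\Du f$,
\item a middle term $\frac1x\sum_{j\in J_{r,s}}c_j\,\Delta_{j,r}f\,\Delta_{j,s}g$,
\item a last term $-\frac{\gamma}{x}\,T_{\lambda_r}f\,T_{\lambda_s}g$,
\end{itemize}
and that it holds for all polynomial $f,g$ and all Binet data. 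Then $\alpha=\beta=1$, $c_j=A_j$ and $\gamma=\U_0$.

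I would check this by testing on monomials $f=x^a$ and $g=x^b$. Both sides reduce to $x^{a+b-1}$ times polynomial expressions in $\lambda$'s and $A$'s. Comparing the coefficients of each $A_j$ as functions of $(a,b)$, with the $\lambda_j$ distinct, then forces the constants. The choices $f=g=1$, $f=1$ alone and $g=1$ alone, together with the distinctness of the exponentials $\lambda_j^a\lambda_j^b$, isolate $\gamma$, $\alpha$, $\beta$ and the $c_j$ in turn. If this comparison becomes unwieldy, I would fall back on stating uniqueness as a definition: the decomposition attached to the chosen reference pair $(r,s)$.
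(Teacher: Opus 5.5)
Your existence step is exactly the paper's proof. The paper justifies the corollary in one line as ``merely a regrouping'' of the terms of Theorem~\ref{thm:spectral-product-rule}, and gives the word ``uniquely'' no separate content; in effect it adopts your fallback of treating the decomposition as a definition attached to the reference pair $(r,s)$. Your uniqueness analysis is therefore an addition to the paper, not an alternative route. It gives ``uniquely'' a precise, checkable meaning, at the cost of a coefficient comparison and an explicit genericity assumption.

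The coefficient comparison is sound, but two details in your sketch need correcting.

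First, $f=g=1$ does not isolate $\gamma$; it only gives $\gamma=(\alpha+\beta-1)\U_0$. The real work is done by two other tests. Taking $f=1$, $g=x^b$ gives $(1-\alpha)\U_b=(\beta\U_0-\gamma)\lambda_s^b$. Taking $f=x^a$, $g=1$ gives $(1-\beta)\U_a=(\alpha\U_0-\gamma)\lambda_r^a$. Expanding $\U_n=\sum_q A_q\lambda_q^n$ then yields $(1-\alpha)A_q=0$ for $q\neq s$ and $(1-\beta)A_p=0$ for $p\neq r$. The $c_j$ are fixed by the coefficient of $\lambda_j^a\lambda_j^b$. Only the $j$-th channel contributes to that coefficient, since $j\notin\{r,s\}$, so $c_j=A_j$ holds for arbitrary data.

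Second, distinctness of the roots is not enough to force $\alpha=\beta=1$. You also need some $A_q\neq0$ with $q\neq s$ and some $A_p\neq0$ with $p\neq r$. This is automatic when the $A_j$ are independent formal parameters and $\alpha,\beta,\gamma$ lie in the integral domain $\mathbb K[A_1,\ldots,A_k]$. It fails for particular sequences allowed by the paper's hypotheses. For example, $\U_n=2^n$ solves $\U_{n+2}=3\U_{n+1}-2\U_n$, with roots $1,2$ and $A_1=0$, $A_2=1$. Taking $\lambda_r=1$, $\lambda_s=2$, one has $T_{\lambda_r}f\,\Du g=\frac{\U_0}{x}T_{\lambda_r}f\,T_{\lambda_s}g=x^{-1}f(x)g(2x)$ identically. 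So $L_{r,s}$ and $C_{r,s}$ can exchange any scalar multiple of this term.

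Consequently, for a fixed sequence with scalar $\alpha,\beta,\gamma,c_j$, your uniqueness statement holds exactly when $\U$ is not a single geometric mode $A\lambda_r^n$ or $A\lambda_s^n$. The residual part $I_{r,s}$ is unique in every case. You should state the formal-parameter framing, or this non-degeneracy hypothesis, explicitly as part of the claim; it is doing real work.
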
 
\begin{proof} 
This is merely a regrouping of the three terms appearing in Theorem~\ref{thm:spectral-product-rule}. 
\end{proof} 
The first contribution, \[ L_{r,s}(f,g), \] plays the role of a first-order Leibniz-type expression. The remaining two terms measure the deviation of \(\Du\) from a two-term product rule. The interaction term \[ I_{r,s}(f,g) = \frac1x \sum_{j\in J_{r,s}} A_j \Delta_{j,r}f \Delta_{j,s}g \] contains all dependence on the characteristic modes not chosen as reference roots. These modes form the residual spectral subsystem associated with the product. 
\begin{proposition}[Number of interaction channels] \label{prop:number-spectral-channels} 
For a recurrence sequence of order \(k\), the interaction term contains exactly $k-2$ spectral channels. 
\end{proposition}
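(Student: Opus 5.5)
The plan is to read the statement directly off the interaction term of Corollary~\ref{cor:product-rule-decomposition},
\[
I_{r,s}(f,g)=\frac1x\sum_{j\in J_{r,s}}A_j\,\Delta_{j,r}f\,\Delta_{j,s}g ,
\]
and count the summation indices. I will call a \emph{spectral channel} a summand of this sum. Each summand is indexed by a single characteristic root $\lambda_j$ and carries its own Binet weight $A_j$. So the number of channels equals the number of indices in the summation set $J_{r,s}$.

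The first step recalls why the sum runs over $J_{r,s}$ rather than over all of $\{1,\ldots,k\}$. In the proof of Theorem~\ref{thm:spectral-product-rule}, the auxiliary sum $S_{r,s}(f,g)$ is taken over $j=1,\ldots,k$. The $j=r$ term then vanishes because $\Delta_{r,r}f=0$, and the $j=s$ term vanishes because $\Delta_{s,s}g=0$ (Remark~\ref{rem:vanishing-spectral-increments}). These two vanishings hold identically in $f$ and $g$. Hence exactly the two reference indices are removed and no others.

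The second step is the cardinality count. Since $r\neq s$, the indices $r$ and $s$ are two distinct elements of $\{1,\ldots,k\}$. By Definition~\ref{def:Jrs}, this gives $|J_{r,s}|=k-2$, as recorded in \eqref{eq:Jrs-cardinality}. The standing hypothesis that the roots $\lambda_1,\ldots,\lambda_k$ are pairwise distinct guarantees that distinct indices correspond to distinct dilation operators $\T_{\lambda_j}$. So the $k-2$ summands are genuinely different channels and are not repetitions of one mode. This yields exactly $k-2$ channels.

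The only delicate point is interpretive: the statement says ``exactly'' $k-2$ channels, not $k-2$ nonzero terms. A channel with $A_j=0$, or with an increment that happens to vanish for particular $f$ and $g$, still counts as a channel. I would state this convention explicitly, so that the count depends only on the order $k$ and not on the initial data. I would also note the degenerate case $k=2$, where $J_{r,s}=\emptyset$, there are zero channels, and $I_{r,s}\equiv 0$. This is consistent with the claim, and the case $k\geq 2$ is implicit because we need $r\neq s$.
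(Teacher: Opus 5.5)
Your proposal is correct and follows the paper's own argument: the interaction term has one summand for each index in $J_{r,s}=\{1,\ldots,k\}\setminus\{r,s\}$, and $r\neq s$ gives $|J_{r,s}|=k-2$. Your extra remarks are consistent clarifications that the paper leaves implicit: why the $j=r$ and $j=s$ terms drop out, the distinctness of the dilations, and counting channels by index even when $A_j$ or an increment vanishes.
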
 \begin{proof} 
By Definition~\ref{def:Jrs}, 
\[ 
J_{r,s} = \{1,\ldots,k\} \setminus \{r,s\}. 
\] 
Since the indices \(r\) and \(s\) are distinct, \(|J_{r,s}| = k-2. \) The interaction term contains precisely one summand for each element of \(J_{r,s}\). Hence the number of spectral interaction channels is \(k-2\). \end{proof} The proposition gives a simple hierarchy: \begin{center} \begin{tabular}{c|c} Order \(k\) & Number of interaction channels\\ \hline 2 & 0\\ 3 & 1\\ 4 & 2\\ 5 & 3\\ \(\vdots\) & \(\vdots\)\\ \(k\) & \(k-2\) \end{tabular} \end{center} 

The next subsection examines the first low-order cases. There the number of spectral interaction channels is sufficiently small that the general product rule simplifies considerably.

\subsection{Low-Order Cases}

The general product rule becomes particularly transparent in low orders. Since the interaction term contains exactly \(k-2\) spectral channels, the complexity of the multiplicative structure increases with the order of the recurrence. 

\begin{corollary}[Second-order product rule] \label{cor:order-two-product-rule} 
Assume that \(k=2\). Let the characteristic roots be \(\lambda_1,\lambda_2, \) and choose \( \{r,s\} = \{1,2\}. \) Then \begin{equation} \label{eq:order-two-product-rule} \Du(fg) = T_{\lambda_r}f\,\Du g + T_{\lambda_s}g\,\Du f - \frac{\U_0}{x} T_{\lambda_r}f T_{\lambda_s}g. 
\end{equation} 
\end{corollary}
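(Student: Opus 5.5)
This is a direct specialization of Theorem~\ref{thm:spectral-product-rule}, so I will derive it from that result rather than redo the computation. With $k=2$ and $\{r,s\}=\{1,2\}$, the hypothesis $r\neq s$ of the theorem holds for either ordering, $(r,s)=(1,2)$ or $(r,s)=(2,1)$. The theorem therefore applies verbatim and gives
\[
\Du(fg)=T_{\lambda_r}f\,\Du g+T_{\lambda_s}g\,\Du f+\frac1x\sum_{j\in J_{r,s}}A_j\Delta_{j,r}f\,\Delta_{j,s}g-\frac{\U_0}{x}T_{\lambda_r}fT_{\lambda_s}g .
\]

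The key step is to observe that, by Definition~\ref{def:Jrs}, $J_{r,s}=\{1,2\}\setminus\{r,s\}=\emptyset$. Equivalently, Proposition~\ref{prop:number-spectral-channels} gives $|J_{r,s}|=k-2=0$. The interaction term $I_{r,s}(f,g)$ of Corollary~\ref{cor:product-rule-decomposition} is then an empty sum, hence zero. What remains is exactly \eqref{eq:order-two-product-rule}.

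As a sanity check, I would also verify this directly from the proof of Theorem~\ref{thm:spectral-product-rule}. There $S_{r,s}(f,g)$ has only the two summands $j=r$ and $j=s$, and these vanish by Remark~\ref{rem:vanishing-spectral-increments}. This confirms the result independently of the $J_{r,s}$ bookkeeping.

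There is no genuine obstacle. The only care needed is to note that the statement holds for both labelings of the roots. This is automatic, since the theorem was proved for arbitrary distinct $r,s$.
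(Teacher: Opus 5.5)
Your proposal is correct and follows the paper's proof exactly: specialize Theorem~\ref{thm:spectral-product-rule}, observe that $J_{r,s}=\varnothing$ when $k=2$, and conclude that the interaction sum is empty and therefore vanishes. Your additional check via the vanishing increments $\Delta_{r,r}f=0$ and $\Delta_{s,s}g=0$ is consistent with this but not needed.
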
 
\begin{proof} 
If \(k=2\), then \(J_{r,s} = \varnothing. \) Therefore the interaction term in Theorem~\ref{thm:spectral-product-rule} vanishes identically. The result follows immediately. 
\end{proof} 
Thus second-order recurrences are exceptional: after choosing the two reference roots, no additional spectral channel remains. 

\begin{corollary}[Second-order Ward form] \label{cor:order-two-ward} Assume that \(k=2\) and \(\U_0=0. \) Then 
\begin{equation} \label{eq:order-two-ward} 
\Du(fg) = T_{\lambda_r}f\,\Du g + T_{\lambda_s}g\,\Du f. \end{equation} 
\end{corollary}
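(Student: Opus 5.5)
This corollary follows by specializing Corollary~\ref{cor:order-two-product-rule} to the Ward regime. I will take $k=2$, with characteristic roots $\lambda_1,\lambda_2$ and $\{r,s\}=\{1,2\}$, and work with any admissible pair $f,g$. Corollary~\ref{cor:order-two-product-rule} then gives
\[
\Du(fg)=T_{\lambda_r}f\,\Du g+T_{\lambda_s}g\,\Du f-\frac{\U_0}{x}\,T_{\lambda_r}f\,T_{\lambda_s}g .
\]
Under the hypothesis $\U_0=0$, the correction term $-\frac{\U_0}{x}T_{\lambda_r}fT_{\lambda_s}g$ vanishes identically, and this leaves exactly \eqref{eq:order-two-ward}.

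The same result can be reached by a second route, as a consistency check. Proposition~\ref{prop:ward-product-rule} gives the Ward form of the product rule. For $k=2$ the index set $J_{r,s}=\{1,2\}\setminus\{r,s\}$ is empty, so the interaction sum in \eqref{eq:ward-product-rule} is an empty sum and equals zero. What remains is again the two-term rule.

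No step here is a real obstacle. The only point to watch is that the statement holds for either choice of ordering, $(r,s)=(1,2)$ or $(r,s)=(2,1)$. This is automatic, because Theorem~\ref{thm:spectral-product-rule} was proved for an arbitrary pair of distinct indices. So I will note that both orderings give valid, generally different, two-term Leibniz rules, each following from the same specialization.
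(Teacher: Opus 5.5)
Your proposal is correct and follows essentially the same route as the paper: the paper gives no separate proof and places this corollary directly after Corollary~\ref{cor:order-two-product-rule}, so the intended argument is to set $\U_0=0$ there. Your second route (noting $J_{r,s}=\varnothing$ in Proposition~\ref{prop:ward-product-rule}) and your remark that both orderings $(r,s)=(1,2)$ and $(2,1)$ are covered by Theorem~\ref{thm:spectral-product-rule} are both accurate.
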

Equation~\eqref{eq:order-two-ward} is the closest analogue of a Leibniz rule in the distinct-root theory. The result contains exactly two first-order terms and no higher spectral correction. The situation changes immediately in order three. 

\begin{corollary}[Third-order product rule] \label{cor:order-three-product-rule} 
Assume that \(k=3\). Let \(\{r,s,\ell\} = \{1,2,3\}. \) Then \begin{align} \label{eq:order-three-product-rule} 
\Du(fg) ={}& T_{\lambda_r}f\,\Du g + T_{\lambda_s}g\,\Du f  + \frac{A_{\ell}}{x} \Delta_{\ell,r}f \Delta_{\ell,s}g - \frac{\U_0}{x} T_{\lambda_r}f T_{\lambda_s}g. 
\end{align} 
\end{corollary}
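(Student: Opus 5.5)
This is a direct specialization of Theorem~\ref{thm:spectral-product-rule} to $k=3$, so I will deduce it from that theorem rather than expand the dilation sums again. Since $\{r,s,\ell\}=\{1,2,3\}$, the indices $r$ and $s$ are distinct, and the general rule \eqref{eq:spectral-product-rule} applies to this pair of reference roots. Its first-order terms $T_{\lambda_r}f\,\Du g+T_{\lambda_s}g\,\Du f$ and its initial-value term $-\frac{\U_0}{x}T_{\lambda_r}f\,T_{\lambda_s}g$ carry over unchanged. Only the interaction term needs attention.

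The key step is to identify the residual index set. By Definition~\ref{def:Jrs},
\[
J_{r,s}=\{1,2,3\}\setminus\{r,s\}=\{\ell\},
\]
which agrees with the count $|J_{r,s}|=k-2=1$ in Proposition~\ref{prop:number-spectral-channels}. The interaction sum therefore has a single summand:
\[
\frac1x\sum_{j\in J_{r,s}}A_j\,\Delta_{j,r}f\,\Delta_{j,s}g=\frac{A_\ell}{x}\,\Delta_{\ell,r}f\,\Delta_{\ell,s}g.
\]
Substituting this into \eqref{eq:spectral-product-rule} gives \eqref{eq:order-three-product-rule} exactly.

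There is no real obstacle here. The only point to check is that the hypothesis $\{r,s,\ell\}=\{1,2,3\}$ forces $r$, $s$ and $\ell$ to be pairwise distinct, so that $J_{r,s}$ is exactly $\{\ell\}$ and not empty. Optionally, I could add a sanity check by expanding $A_\ell(T_{\lambda_\ell}f-T_{\lambda_r}f)(T_{\lambda_\ell}g-T_{\lambda_s}g)$ and using $A_1+A_2+A_3=\U_0$ to recover $\sum_j A_jT_{\lambda_j}(fg)$. This would only repeat the argument already given in the proof of Theorem~\ref{thm:spectral-product-rule}.
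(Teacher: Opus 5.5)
Your proposal is correct and matches the paper's proof: specialize Theorem~\ref{thm:spectral-product-rule} to $k=3$, observe that $J_{r,s}=\{\ell\}$, and substitute the single surviving summand $\frac{A_\ell}{x}\Delta_{\ell,r}f\,\Delta_{\ell,s}g$. Your remark that $\{r,s,\ell\}=\{1,2,3\}$ forces the three indices to be pairwise distinct is the one point the paper leaves implicit.
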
 
\begin{proof} 
If \(k=3\), then \(J_{r,s} = \{\ell\}. \) Hence the interaction sum appearing in Theorem~\ref{thm:spectral-product-rule} contains exactly one term: \[ \sum_{j\in J_{r,s}} A_j \Delta_{j,r}f \Delta_{j,s}g = A_{\ell} \Delta_{\ell,r}f \Delta_{\ell,s}g. \] Substituting into \eqref{eq:spectral-product-rule} gives \eqref{eq:order-three-product-rule}. 
\end{proof} 

\begin{corollary}[Fourth-order product rule] \label{cor:order-four-product-rule} 
Assume that \(k=4\). Let \(J_{r,s}=\{p,q\}. \) Then 
\begin{align} 
\Du(fg) ={}& T_{\lambda_r}f\,\Du g + T_{\lambda_s}g\,\Du f + \frac1x \Big( A_p\Delta_{p,r}f\Delta_{p,s}g + A_q\Delta_{q,r}f\Delta_{q,s}g \Big)\\
& - \frac{\U_0}{x} T_{\lambda_r}f T_{\lambda_s}g . \nonumber 
\end{align} 
\end{corollary}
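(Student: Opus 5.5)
This is a direct specialization of Theorem~\ref{thm:spectral-product-rule}, following the same route as Corollaries~\ref{cor:order-two-product-rule} and~\ref{cor:order-three-product-rule}. Fix distinct reference indices $r,s\in\{1,2,3,4\}$. Since $k=4$, Definition~\ref{def:Jrs} gives
\[
J_{r,s}=\{1,2,3,4\}\setminus\{r,s\}.
\]
By Proposition~\ref{prop:number-spectral-channels}, this set has exactly $k-2=2$ elements. We label them $p$ and $q$, so that $\{r,s,p,q\}=\{1,2,3,4\}$ and $p\neq q$. This labeling is the content of the hypothesis $J_{r,s}=\{p,q\}$.

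Next, we evaluate the interaction sum in \eqref{eq:spectral-product-rule} over this two-element index set. Since $p\neq q$, the sum splits into exactly two summands:
\[
\sum_{j\in J_{r,s}}A_j\Delta_{j,r}f\,\Delta_{j,s}g
=
A_p\Delta_{p,r}f\,\Delta_{p,s}g+A_q\Delta_{q,r}f\,\Delta_{q,s}g .
\]
Substituting this into \eqref{eq:spectral-product-rule} produces the claimed identity. The first-order terms $T_{\lambda_r}f\,\Du g+T_{\lambda_s}g\,\Du f$ and the initial-value term $-\frac{\U_0}{x}T_{\lambda_r}fT_{\lambda_s}g$ carry over unchanged. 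Here $\U_0=A_1+A_2+A_3+A_4$ by \eqref{eq:U0-spectral-sum}.

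There is no genuine obstacle. The only point needing care is that Theorem~\ref{thm:spectral-product-rule} holds for \emph{every} choice of distinct $r,s$, so the formula is valid for all six unordered reference pairs. The ordered roles of $r$ and $s$ must be kept consistent: $\Delta_{\cdot,r}$ acts on $f$ and $\Delta_{\cdot,s}$ acts on $g$. As an optional sanity check, one could verify the identity on $f=x^a$, $g=x^b$. Expanding the increments, both sides should reduce to $\U_{a+b}x^{a+b-1}$ via the Binet representation \eqref{eq:Binet-representation-operator-section}. This is routine and not needed for the proof.
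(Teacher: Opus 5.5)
Your proposal is correct and takes the same route as the paper. The paper states this corollary without a separate proof, but it is meant to follow exactly like Corollary~\ref{cor:order-three-product-rule}: specialize Theorem~\ref{thm:spectral-product-rule} to $J_{r,s}=\{p,q\}$ and split the interaction sum into its two summands, which is precisely what you do.
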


\begin{remark} 
Order four is the first order for which distinct interaction channels may cancel each other. This phenomenon does not occur in orders two and three. 
\end{remark}

The low-order hierarchy may therefore be summarized as 
\begin{center} 
\begin{tabular}{c|c|c} 
Order & Interaction channels & New phenomenon\\ \hline 2 & \(0\) & no spectral interaction\\ 3 & \(1\) & one interaction channel\\ 4 & \(2\) & interaction-channel cancellation 
\end{tabular} 
\end{center} 
Beginning with order four, the interaction term acquires an internal geometry. 

\subsection{Monomial Consequences}

The spectral product rule induces nontrivial identities for the recurrence sequence itself. These identities are obtained by evaluating the product rule on monomials. 

\begin{proposition}[Monomial product identity] \label{prop:monomial-product-identity} 
For every \(m,n\geq0\), 
\begin{align} \label{eq:monomial-product-identity} 
\U_{m+n} ={}& \lambda_r^{m}\U_n + \lambda_s^{n}\U_m - \U_0 \lambda_r^{m} \lambda_s^{n} + \sum_{j\in J_{r,s}} A_j \bigl( \lambda_j^{m} - \lambda_r^{m} \bigr) \bigl( \lambda_j^{n} - \lambda_s^{n} \bigr).
\end{align} 
\end{proposition}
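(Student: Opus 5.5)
The plan is to evaluate the spectral product rule of Theorem~\ref{thm:spectral-product-rule} on the pair $f=x^m$, $g=x^n$. Then I compare coefficients of the single monomial $x^{m+n-1}$ that appears on both sides.

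\emph{Left-hand side.} Since $fg=x^{m+n}$, the fundamental monomial identity (Theorem~\ref{thm:fundamental-monomial-identity}) gives
\[
\Du(x^{m+n})=\U_{m+n}x^{m+n-1}.
\]

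\emph{Right-hand side, term by term.}
\begin{enumerate}
\item By the diagonal action of dilations \eqref{eq:dilation-monomial}, we have $T_{\lambda_r}x^m=\lambda_r^m x^m$. Combined with $\Du x^n=\U_nx^{n-1}$, the first term is $\lambda_r^m\U_n x^{m+n-1}$.
\item Symmetrically, $T_{\lambda_s}x^n\,\Du x^m=\lambda_s^n\U_m x^{m+n-1}$.
\item The spectral increments are $\Delta_{j,r}x^m=(\lambda_j^m-\lambda_r^m)x^m$ and $\Delta_{j,s}x^n=(\lambda_j^n-\lambda_s^n)x^n$. Hence the interaction term equals
\[
x^{m+n-1}\sum_{j\in J_{r,s}}A_j(\lambda_j^m-\lambda_r^m)(\lambda_j^n-\lambda_s^n).
\]
\item The initial-value term becomes $-\U_0\lambda_r^m\lambda_s^n x^{m+n-1}$.
\end{enumerate}

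Every term is a scalar multiple of the same Laurent monomial $x^{m+n-1}$. This holds even when $m=0$ or $n=0$, where negative powers occur; we work in $\mathbb K[x,x^{-1}]$, in which all the operators are defined. Dividing out $x^{m+n-1}$, or equivalently comparing coefficients in the Laurent polynomial ring, gives \eqref{eq:monomial-product-identity}.

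The only point requiring care is the admissibility of monomials as inputs to the product rule when $\U_0\neq0$, since then $\Du$ maps into Laurent polynomials. The proof of Theorem~\ref{thm:spectral-product-rule} is purely algebraic, using only linearity of $\sum_j A_jT_{\lambda_j}$ and $\sum_j A_j=\U_0$, so it applies verbatim to monomials.

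As a cross-check, or as an alternative self-contained route, one can verify the identity directly from the Binet representation. Expand $\sum_{j=1}^k A_j(\lambda_j^m-\lambda_r^m)(\lambda_j^n-\lambda_s^n)$ over all $j$; the $j=r$ and $j=s$ terms vanish. The expansion yields
\[
\U_{m+n}-\lambda_s^n\U_m-\lambda_r^m\U_n+\lambda_r^m\lambda_s^n\U_0,
\]
by \eqref{eq:Binet-representation} and \eqref{eq:U0-spectral-sum}, and rearranging gives the claim. I expect no real obstacle; the main thing is bookkeeping of which factor carries the $\lambda_r$ versus $\lambda_s$ reference root.
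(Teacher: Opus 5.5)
Your proposal is correct and follows essentially the same route as the paper: set $f=x^m$, $g=x^n$ in the spectral product rule, reduce every term to a scalar multiple of $x^{m+n-1}$ using the fundamental monomial identity and the diagonal action of dilations, and divide out. Your Binet cross-check is also valid; it amounts to the proof of the product rule specialized to monomials, and it handles the cases $m=0$ or $n=0$ without any appeal to Laurent polynomials.
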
 
\begin{proof} 
Let \(f(x)=x^m\), \(g(x)=x^n. \) Then \(fg=x^{m+n}. \) By the fundamental monomial identity, \[ \Du(fg) = \U_{m+n}x^{m+n-1}. \] Moreover, 
\[ T_{\lambda_r}f = \lambda_r^m x^m, \qquad T_{\lambda_s}g = \lambda_s^n x^n, \] 
and 
\[ \Du(x^m) = \U_mx^{m-1}, \qquad \Du(x^n) = \U_nx^{n-1}. \] 
Substituting these expressions into Theorem~\ref{thm:spectral-product-rule} gives 
\begin{align} 
\U_{m+n}x^{m+n-1} ={}& \lambda_r^m\U_nx^{m+n-1} + \lambda_s^n\U_mx^{m+n-1} - \U_0 \lambda_r^m \lambda_s^n x^{m+n-1}\\ & + \sum_{j\in J_{r,s}} A_j (\lambda_j^m-\lambda_r^m) (\lambda_j^n-\lambda_s^n) x^{m+n-1}. \nonumber \end{align} Dividing by \(x^{m+n-1}\) yields \eqref{eq:monomial-product-identity}. 
\end{proof} 
The identity shows that the functional product rule is reflected by a corresponding addition law for the recurrence sequence. The complexity of that addition law increases with the number of spectral interaction channels. 

\begin{corollary}[Second-order addition identity] \label{cor:order-two-addition} 
Assume that \(k=2\). Then 
\begin{equation} \label{eq:order-two-addition} \U_{m+n} = \lambda_r^m\U_n + \lambda_s^n\U_m - \U_0 \lambda_r^m \lambda_s^n . \end{equation} 
\end{corollary}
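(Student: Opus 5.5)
This identity should drop out of Proposition~\ref{prop:monomial-product-identity} by specializing to order two. That proposition gives, for every $m,n\geq 0$ and every choice of distinct reference indices $r,s$,
\[
\U_{m+n} = \lambda_r^{m}\U_n + \lambda_s^{n}\U_m - \U_0 \lambda_r^{m} \lambda_s^{n} + \sum_{j\in J_{r,s}} A_j \bigl( \lambda_j^{m} - \lambda_r^{m} \bigr) \bigl( \lambda_j^{n} - \lambda_s^{n} \bigr).
\]
When $k=2$ we necessarily have $\{r,s\}=\{1,2\}$. Hence $J_{r,s}=\{1,2\}\setminus\{r,s\}=\varnothing$, as in the proof of Corollary~\ref{cor:order-two-product-rule}. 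The residual sum is then empty and equals zero, which leaves exactly \eqref{eq:order-two-addition}.

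Equivalently, one could apply Corollary~\ref{cor:order-two-product-rule} directly to $f=x^m$, $g=x^n$. Using $T_{\lambda_r}x^m=\lambda_r^m x^m$, $T_{\lambda_s}x^n=\lambda_s^n x^n$ and the fundamental monomial identity, every term becomes a scalar multiple of $x^{m+n-1}$. Comparing coefficients, that is, dividing by $x^{m+n-1}$ in $\mathbb K[x,x^{-1}]$, gives the same identity.

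As an independent sanity check, I would verify the identity straight from the Binet form. Take $r=1$, $s=2$, so that $\U_m=A_1\lambda_1^m+A_2\lambda_2^m$ and $\U_0=A_1+A_2$. Expanding the right-hand side gives
\[
A_1\lambda_1^{m+n}+A_2\lambda_1^m\lambda_2^n+A_1\lambda_1^m\lambda_2^n+A_2\lambda_2^{m+n}-(A_1+A_2)\lambda_1^m\lambda_2^n .
\]
The cross terms cancel, leaving $\U_{m+n}$.

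There is no real obstacle here. The only point needing care is the case $m+n=0$, where $x^{m+n-1}=x^{-1}$. There we work in Laurent polynomials, and coefficient comparison remains valid; the direct Binet check sidesteps this issue entirely.
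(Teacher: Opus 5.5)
Your proposal is correct and follows the paper's route: the paper states this corollary without a separate proof, as the $k=2$ specialization of Proposition~\ref{prop:monomial-product-identity}, where $J_{r,s}=\varnothing$ makes the residual sum vanish. This is equivalent to applying Corollary~\ref{cor:order-two-product-rule} to $f=x^m$, $g=x^n$. Your direct Binet expansion, in which the cross terms $(A_1+A_2-A_1-A_2)\lambda_1^m\lambda_2^n$ cancel, is a valid independent check that also avoids the Laurent bookkeeping when $m+n=0$.
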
 

\begin{corollary}[Second-order Ward addition identity] \label{cor:order-two-ward-addition} 
Assume that \(k=2, \U_0=0. \) Then 
\begin{equation} \label{eq:order-two-ward-addition} 
\U_{m+n} = \lambda_r^m\U_n + \lambda_s^n\U_m. 
\end{equation} 
\end{corollary}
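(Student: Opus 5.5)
This identity is the special case $k=2$, $\U_0=0$ of Proposition~\ref{prop:monomial-product-identity}, so I would derive it by specialization rather than by a new computation. First I take $k=2$ and $\{r,s\}=\{1,2\}$ in \eqref{eq:monomial-product-identity}. By Definition~\ref{def:Jrs}, $J_{r,s}=\varnothing$, so the residual interaction sum vanishes identically. This leaves the second-order addition identity of Corollary~\ref{cor:order-two-addition}. Then I impose $\U_0=0$, which removes the term $-\U_0\lambda_r^m\lambda_s^n$, and what remains is exactly \eqref{eq:order-two-ward-addition}.

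An equivalent route is to evaluate the second-order Ward product rule of Corollary~\ref{cor:order-two-ward} on $f=x^m$, $g=x^n$. Using Theorem~\ref{thm:fundamental-monomial-identity} for $\Du x^{m+n}$, $\Du x^m$ and $\Du x^n$, together with $T_{\lambda_r}x^m=\lambda_r^m x^m$ and $T_{\lambda_s}x^n=\lambda_s^n x^n$, gives
\[
\U_{m+n}x^{m+n-1}=(\lambda_r^m\U_n+\lambda_s^n\U_m)x^{m+n-1}.
\]
Comparing coefficients in $\mathbb K[x,x^{-1}]$ then yields the claim.

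The only point needing care is the edge case $m+n=0$, where the monomials involve $x^{-1}$. With $\U_0=0$, both sides are $0$ directly, so the identity still holds. As a sanity check independent of the operator machinery, one can write $\U_n=A_r\lambda_r^n+A_s\lambda_s^n$ with $A_r+A_s=0$ and expand
\[
\lambda_r^m\U_n+\lambda_s^n\U_m=A_r\lambda_r^{m+n}+A_s\lambda_s^{m+n}+\lambda_r^m\lambda_s^n(A_s+A_r).
\]
The last term vanishes, so this equals $\U_{m+n}$. I expect no real obstacle beyond the bookkeeping of the $J_{r,s}=\varnothing$ reduction.
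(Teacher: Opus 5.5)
Your proposal is correct and follows the paper's implicit argument: the corollary is the $\U_0=0$ case of Corollary~\ref{cor:order-two-addition}, which is Proposition~\ref{prop:monomial-product-identity} with $J_{r,s}=\varnothing$ (equivalently, Corollary~\ref{cor:order-two-ward} evaluated on $x^m$ and $x^n$). Your direct Binet check using $A_r+A_s=\U_0=0$ is a valid independent confirmation, and the case $m+n=0$ needs no special treatment, since $x^{m+n-1}$ is a unit in $\mathbb K[x,x^{-1}]$.
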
 

Thus the second-order Ward regime possesses a particularly simple addition law. Beginning with order three, higher spectral corrections appear, and from order four onward several interaction channels may contribute simultaneously. 
\begin{remark} 
The identity \eqref{eq:monomial-product-identity} reveals a duality between the operator and the recurrence sequence. The spectral product rule determines an addition law for the recurrence coefficients, while the recurrence coefficients determine the monomial action of the operator through \[ \Du x^n=\U_nx^{n-1}. \] The operator and the recurrence therefore encode the same spectral information in two different forms. 
\end{remark} 
The spectral product rule has now been completely analyzed. The next section studies its deviation from a two-term spectral Leibniz identity. This deviation is measured by the Leibniz defect, which provides the natural criterion for determining when the higher spectral interactions cancel.

\section{Leibniz defect}\label{sec:leibniz-defect}

\subsection{Definition of the Leibniz Defect}

The spectral product rule derived in the previous section shows that the \(U\)-derivative does not, in general, satisfy a two-term Leibniz identity. The additional terms arise from two independent sources: 
\begin{enumerate} 
\item the interaction among the characteristic modes that remain after the reference roots have been selected; 
\item the contribution generated by the distinguished recurrence value \(\U_0\). 
\end{enumerate} 
It is therefore natural to isolate the difference between \(\Du(fg)\) and the two first-order spectral contributions. 
\begin{definition}[Leibniz defect] \label{def:leibniz-defect} 
Let \[ r,s\in\{1,\ldots,k\}, \qquad r\neq s. \] The \((r,s)\)-Leibniz defect of an admissible pair of functions \((f,g)\) is defined by 
\begin{equation} \label{eq:leibniz-defect} 
\mathcal E_{r,s}(f,g) = \Du(fg) - T_{\lambda_r}f\,\Du g - T_{\lambda_s}g\,\Du f. \end{equation} 
\end{definition} 
Thus \[ \mathcal E_{r,s}(f,g) \] measures the failure of the \(U\)-derivative to satisfy the two-term spectral Leibniz identity associated with the reference roots \(\lambda_r\) and \(\lambda_s\). Using the spectral product rule, the defect admits an explicit representation. 
\begin{proposition}[Explicit defect formula] \label{prop:defect-expression} 
For every admissible pair of functions \(f\) and \(g\), 
\begin{align} \label{eq:defect-expression} 
\mathcal E_{r,s}(f,g) = {}& \frac1x \sum_{j\in J_{r,s}} A_j \Delta_{j,r}f \Delta_{j,s}g - \frac{\U_0}{x} T_{\lambda_r}f T_{\lambda_s}g . 
\end{align} 
\end{proposition}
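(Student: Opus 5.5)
This is immediate from the spectral product rule. The plan is to start from Definition~\ref{def:leibniz-defect}, substitute Theorem~\ref{thm:spectral-product-rule} with the same pair of reference indices $r\neq s$, and cancel the first-order terms.

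First, for admissible $f,g$, Theorem~\ref{thm:spectral-product-rule} writes $\Du(fg)$ as
\[
T_{\lambda_r}f\,\Du g + T_{\lambda_s}g\,\Du f + \frac1x \sum_{j\in J_{r,s}} A_j \Delta_{j,r}f\,\Delta_{j,s}g - \frac{\U_0}{x} T_{\lambda_r}f\,T_{\lambda_s}g .
\]
Second, subtracting $T_{\lambda_r}f\,\Du g + T_{\lambda_s}g\,\Du f$ as in \eqref{eq:leibniz-defect} removes the first-order part $L_{r,s}(f,g)$ of Corollary~\ref{cor:product-rule-decomposition}. What remains is exactly $I_{r,s}(f,g)+C_{r,s}(f,g)$, which is the right-hand side of \eqref{eq:defect-expression}.

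The only point needing care is consistency. The defect must be formed with the same ordered pair $(r,s)$ used in the product rule, so that $\lambda_r$ pairs with $f$ and $\lambda_s$ with $g$. The admissibility class must also be the one in which all the dilations and the factor $x^{-1}$ make sense. No genuine obstacle is expected, since everything reduces to the identity $\sum_j A_jT_{\lambda_j}h = x\Du h$ and $\sum_j A_j=\U_0$ already used in Theorem~\ref{thm:spectral-product-rule}.
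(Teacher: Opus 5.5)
Your proposal is correct and matches the paper's argument: the paper also obtains the formula by subtracting $T_{\lambda_r}f\,\Du g + T_{\lambda_s}g\,\Du f$ from both sides of the spectral product rule (Theorem~\ref{thm:spectral-product-rule}). Your point that the defect must use the same ordered pair $(r,s)$ as the product rule is exactly what makes this cancellation go through.
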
 
\begin{proof} Subtracting \[ T_{\lambda_r}f\,\Du g + T_{\lambda_s}g\,\Du f \] from both sides of Theorem~\ref{thm:spectral-product-rule} gives \eqref{eq:defect-expression}. \end{proof} The defect separates naturally into two contributions: \begin{equation} \label{eq:defect-decomposition} \mathcal E_{r,s} = \mathcal S_{r,s} + \mathcal C_{r,s}, \end{equation} where \begin{equation} \label{eq:spectral-defect-part} \mathcal S_{r,s}(f,g) = \frac1x \sum_{j\in J_{r,s}} A_j \Delta_{j,r}f \Delta_{j,s}g, \end{equation} and \begin{equation} \label{eq:initial-value-defect-part} \mathcal C_{r,s}(f,g) = - \frac{\U_0}{x} T_{\lambda_r}f T_{\lambda_s}g. \end{equation} The first term records the interaction among the residual characteristic channels, whereas the second depends only on the distinguished recurrence value \(\U_0\). \begin{remark} The Leibniz defect is identically zero if and only if the spectral product rule reduces to the two-term identity \[ \Du(fg) = T_{\lambda_r}f\,\Du g + T_{\lambda_s}g\,\Du f. \] Consequently, the study of the multiplicative structure of \(\Du\) is equivalent to the study of the corresponding defect. \end{remark} The explicit formula \eqref{eq:defect-expression} shows that vanishing defect is governed by an equation relating spectral interaction terms and the contribution generated by \(\U_0\). The next subsection establishes the resulting general zero-defect criterion.

\subsection{The General Zero-Defect Criterion} 
The explicit formula obtained in the preceding subsection shows that the Leibniz defect is completely determined by the interaction between the remaining spectral channels and the contribution generated by the distinguished recurrence value \(\U_0\). We now characterize precisely when the defect vanishes. 
\begin{theorem}[General zero-defect criterion] \label{thm:zero-defect-criterion} 
Let \(r\neq s\). For an admissible pair of functions \((f,g)\), the following statements are equivalent: 
\begin{enumerate} 
\item \[ \mathcal E_{r,s}(f,g)=0; \] 
\item 
\begin{equation} \label{eq:general-zero-defect-condition} 
\sum_{j\in J_{r,s}} A_j \Delta_{j,r}f \Delta_{j,s}g = \U_0 T_{\lambda_r}f T_{\lambda_s}g. 
\end{equation} 
\end{enumerate} 
\end{theorem}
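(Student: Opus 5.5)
The plan is to read the equivalence directly off Proposition~\ref{prop:defect-expression}, which is itself the spectral product rule of Theorem~\ref{thm:spectral-product-rule} rearranged. That proposition gives, for every admissible pair $(f,g)$,
\[
\mathcal E_{r,s}(f,g)
=
\frac1x\left(
\sum_{j\in J_{r,s}} A_j\,\Delta_{j,r}f\,\Delta_{j,s}g
-
\U_0\,T_{\lambda_r}f\,T_{\lambda_s}g
\right).
\]
In the notation of \eqref{eq:defect-decomposition}, this says $x\,\mathcal E_{r,s}(f,g)=x\,\mathcal S_{r,s}(f,g)+x\,\mathcal C_{r,s}(f,g)$. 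The first step is simply to record this factored form.

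For (2)$\Rightarrow$(1), assume \eqref{eq:general-zero-defect-condition}. Then the bracketed expression is zero, so $\mathcal E_{r,s}(f,g)=x^{-1}\cdot 0=0$.

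For (1)$\Rightarrow$(2), assume $\mathcal E_{r,s}(f,g)=0$ and multiply the displayed identity by $x$. The bracketed expression then vanishes, which is exactly \eqref{eq:general-zero-defect-condition}.

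The only point needing care is the legitimacy of multiplying and dividing by $x$. The plan is to work in the ambient space where $\Du$ was defined, namely $\mathbb K[x,x^{-1}]$ or more generally functions on a domain avoiding $x=0$. There multiplication by $x$ is injective: $x$ is a unit in the Laurent ring, and pointwise it is nonzero. With that, the two implications are immediate, and I expect no genuine obstacle beyond stating this convention on the domain explicitly.
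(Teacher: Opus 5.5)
Your proposal is correct and follows the paper's proof: both read the equivalence directly off the explicit defect formula of Proposition~\ref{prop:defect-expression} by multiplying or dividing by $x$. Your remark that multiplication by $x$ is injective (as a unit in $\mathbb K[x,x^{-1}]$, or pointwise away from $x=0$) makes explicit the step the paper covers only with ``the converse follows by reversing the argument.''
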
 
\begin{proof} 
By Proposition~\ref{prop:defect-expression}, \[ \mathcal E_{r,s}(f,g) = \frac1x \sum_{j\in J_{r,s}} A_j \Delta_{j,r}f \Delta_{j,s}g - \frac{\U_0}{x} T_{\lambda_r}f T_{\lambda_s}g. \] Therefore \[ \mathcal E_{r,s}(f,g)=0 \] if and only if \[ \frac1x \sum_{j\in J_{r,s}} A_j \Delta_{j,r}f \Delta_{j,s}g = \frac{\U_0}{x} T_{\lambda_r}f T_{\lambda_s}g. \] Multiplication by \(x\) yields \eqref{eq:general-zero-defect-condition}. The converse follows by reversing the argument. 
\end{proof} 
The theorem shows that zero defect is governed by a balance equation between two quantities of different origin. The left-hand side of \eqref{eq:general-zero-defect-condition} depends only on the residual spectral channels contained in \(J_{r,s}\). In contrast, the right-hand side depends only on the distinguished recurrence value \(\U_0\) and the two reference roots. Consequently, the vanishing of the defect is not a purely local property of the functions involved. It reflects a compatibility relation between the characteristic spectrum and the initial-value structure of the recurrence sequence. 
\begin{corollary} \label{cor:zero-defect-equation} 
The pair \((f,g)\) satisfies the two-term spectral Leibniz identity 
\begin{equation} \label{eq:two-term-spectral-leibniz} 
\Du(fg) = T_{\lambda_r}f\,\Du g + T_{\lambda_s}g\,\Du f 
\end{equation} 
if and only if \eqref{eq:general-zero-defect-condition} holds. 
\end{corollary}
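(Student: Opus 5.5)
This corollary is a restatement of Theorem~\ref{thm:zero-defect-criterion} combined with Definition~\ref{def:leibniz-defect}. I will therefore prove it by a short chain of equivalences and no new computation.

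First, by Definition~\ref{def:leibniz-defect}, the two-term spectral Leibniz identity \eqref{eq:two-term-spectral-leibniz} holds for $(f,g)$ if and only if
\[
\mathcal E_{r,s}(f,g)=\Du(fg)-T_{\lambda_r}f\,\Du g-T_{\lambda_s}g\,\Du f=0 .
\]
This step is just moving the two first-order terms to the left-hand side; it is reversible, so it gives an equivalence.

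Second, I invoke Theorem~\ref{thm:zero-defect-criterion}. It states that $\mathcal E_{r,s}(f,g)=0$ is equivalent to the balance equation \eqref{eq:general-zero-defect-condition}. That theorem rests on the explicit defect formula of Proposition~\ref{prop:defect-expression}, which in turn comes from the spectral product rule, Theorem~\ref{thm:spectral-product-rule}. Composing the two equivalences gives the claim.

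The only point needing care is the passage from the identity with the factor $1/x$ to the identity multiplied by $x$, used inside Theorem~\ref{thm:zero-defect-criterion}. I will read both sides as elements of the function space, for instance $\mathbb K[x,x^{-1}]$ for polynomial data, or functions on the domain with $x\neq0$. Multiplication by $x$ is injective there, so the reversal of the argument is legitimate. I will state this explicitly and otherwise cite the theorem directly.
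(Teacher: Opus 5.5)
Your proposal is correct and is exactly the paper's argument: Definition~\ref{def:leibniz-defect} makes the two-term identity equivalent to $\mathcal E_{r,s}(f,g)=0$, and Theorem~\ref{thm:zero-defect-criterion} turns that into the balance equation \eqref{eq:general-zero-defect-condition}. Your remark that multiplication by $x$ is injective on $\mathbb K[x,x^{-1}]$ (or on functions with $x\neq0$) only spells out the reversibility that the paper asserts inside the proof of that theorem.
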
 
\begin{proof} 
By Definition~\ref{def:leibniz-defect}, equation~\eqref{eq:two-term-spectral-leibniz} is equivalent to \[ \mathcal E_{r,s}(f,g)=0. \] The conclusion therefore follows immediately from Theorem~\ref{thm:zero-defect-criterion}. 
\end{proof} 
The criterion \eqref{eq:general-zero-defect-condition} contains the two principal regimes of the theory as special cases. If \[ \U_0=0, \] the right-hand side vanishes and the condition reduces to a pure spectral cancellation equation. If \[ \U_0\neq0, \] the spectral interaction must instead balance the explicit contribution generated by the initial value. These two situations will be studied separately in the next subsections.

\subsection{The Ward Regime: Spectral Cancellation}

We now specialize the general zero-defect criterion to the Ward regime.  
\begin{proposition}[Ward defect formula] \label{prop:ward-defect} 
Assume that \(\U_0=0. \) Then 
\begin{equation} \label{eq:ward-defect} 
\mathcal E_{r,s}(f,g) = \frac1x \sum_{j\in J_{r,s}} A_j \Delta_{j,r}f \Delta_{j,s}g. 
\end{equation} 
\end{proposition}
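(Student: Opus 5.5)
The plan is to specialize Proposition~\ref{prop:defect-expression} directly. That proposition already gives, for every admissible pair $(f,g)$ and every choice of distinct reference indices $r\neq s$,
\[
\mathcal E_{r,s}(f,g)=\frac1x\sum_{j\in J_{r,s}}A_j\,\Delta_{j,r}f\,\Delta_{j,s}g-\frac{\U_0}{x}\,T_{\lambda_r}f\,T_{\lambda_s}g .
\]
Equivalently, in the notation of \eqref{eq:defect-decomposition}, $\mathcal E_{r,s}=\mathcal S_{r,s}+\mathcal C_{r,s}$. Under the hypothesis $\U_0=0$, the initial-value part $\mathcal C_{r,s}(f,g)=-\frac{\U_0}{x}T_{\lambda_r}fT_{\lambda_s}g$ is the zero function. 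This holds identically, irrespective of $f$ and $g$, because it carries the scalar factor $\U_0$. Hence $\mathcal E_{r,s}(f,g)=\mathcal S_{r,s}(f,g)$, which is exactly \eqref{eq:ward-defect}.

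An alternative route, serving as a cross-check, starts from the Ward form of the product rule, Proposition~\ref{prop:ward-product-rule}. I would subtract the two first-order terms $T_{\lambda_r}f\,\Du g+T_{\lambda_s}g\,\Du f$ from both sides and then invoke Definition~\ref{def:leibniz-defect}. Either route is a one-line substitution.

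There is no genuine obstacle; the only point requiring care is bookkeeping. I would note that the identity is meant on the same admissible class of functions as in Theorem~\ref{thm:spectral-product-rule}, that is, on functions for which the dilations $T_{\lambda_j}$ and multiplication by $x^{-1}$ make sense. I would also note that the equality is an identity of functions, valid for every fixed pair $r\neq s$, so no dependence on the choice of reference roots enters beyond the index set $J_{r,s}$.
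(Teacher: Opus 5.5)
Your proposal is correct and follows the paper's proof: the paper also substitutes $\U_0=0$ into the explicit defect formula of Proposition~\ref{prop:defect-expression} and notes that the initial-value term $-\frac{\U_0}{x}T_{\lambda_r}f\,T_{\lambda_s}g$ vanishes identically. Your cross-check through the Ward form of the product rule is an equally valid one-line alternative.
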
 
\begin{proof} 
By Proposition~\ref{prop:defect-expression}, \[ \mathcal E_{r,s}(f,g) = \frac1x \sum_{j\in J_{r,s}} A_j \Delta_{j,r}f \Delta_{j,s}g - \frac{\U_0}{x} T_{\lambda_r}f T_{\lambda_s}g. \] 
Since \(\U_0=0, \) the second term vanishes identically, yielding \eqref{eq:ward-defect}. 
\end{proof} 
The defect is therefore determined entirely by the residual spectral interaction channels. 
\begin{corollary}[Ward zero-defect criterion] \label{cor:ward-zero-defect} 
Assume that \(\U_0=0. \) Then \( \mathcal E_{r,s}(f,g)=0 \) if and only if 
\begin{equation} \label{eq:ward-zero-defect-condition} 
\sum_{j\in J_{r,s}} A_j \Delta_{j,r}f \Delta_{j,s}g = 0. 
\end{equation} 
\end{corollary}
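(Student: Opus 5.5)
This corollary is the specialization $\U_0=0$ of the general zero-defect criterion, so the plan is a direct reduction to Theorem~\ref{thm:zero-defect-criterion}. That theorem states that $\mathcal E_{r,s}(f,g)=0$ if and only if
\[
\sum_{j\in J_{r,s}} A_j \Delta_{j,r}f\,\Delta_{j,s}g = \U_0\, T_{\lambda_r}f\, T_{\lambda_s}g .
\]
Substituting the hypothesis $\U_0=0$ makes the right-hand side identically zero, whatever $f$ and $g$ are. The resulting condition is exactly \eqref{eq:ward-zero-defect-condition}.

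As a consistency check, and as an alternative route, I would also argue from Proposition~\ref{prop:ward-defect}. Under $\U_0=0$ it gives
\[
\mathcal E_{r,s}(f,g)=\frac1x\sum_{j\in J_{r,s}} A_j\Delta_{j,r}f\,\Delta_{j,s}g .
\]
The defect therefore vanishes precisely when the sum vanishes. To pass between the two statements one multiplies or divides by $x$, which is invertible on the admissible function spaces considered (for instance $\mathbb K[x,x^{-1}]$, or functions away from $x=0$). The same multiplication by $x$ was already used in the proof of Theorem~\ref{thm:zero-defect-criterion}.

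There is no real obstacle here. The only point needing care is that the equivalence is an identity of functions, not of pointwise values at $x=0$. Since multiplication by $x^{-1}$ is injective on the relevant domain, vanishing of $\frac1x S$ is equivalent to vanishing of $S$, and the corollary follows.
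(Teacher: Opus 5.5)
Your proof is correct and takes the same route as the paper: specialize the general zero-defect criterion (Theorem~\ref{thm:zero-defect-criterion}) to $\U_0=0$, so the right-hand side $\U_0\,T_{\lambda_r}f\,T_{\lambda_s}g$ vanishes. Your alternative check through Proposition~\ref{prop:ward-defect}, using that multiplication by $x$ is invertible, is also valid but not needed.
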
 
\begin{proof} 
Combining Theorem~\ref{thm:zero-defect-criterion} with the identity \(\U_0=0 \) gives \eqref{eq:ward-zero-defect-condition}. 
\end{proof} 
The Ward regime is therefore governed by a pure cancellation phenomenon: the interaction channels must annihilate each other exactly. Beginning with order four, several nonzero interaction channels may cancel each other. To describe this phenomenon more geometrically, define 
\begin{equation} \label{eq:Vr-definition} 
V_r f(x) = \bigl( \Delta_{j,r}f(x) \bigr)_{j\in J_{r,s}}, 
\end{equation} 
and 
\begin{equation} \label{eq:Vs-definition} 
V_s g(x) = \bigl( \Delta_{j,s}g(x) \bigr)_{j\in J_{r,s}}. 
\end{equation} 
On \(\mathbb K^{k-2}\), introduce the diagonal bilinear form 
\begin{equation} \label{eq:weighted-bilinear-form} 
\langle u,v\rangle_A = \sum_{j\in J_{r,s}} A_j u_j v_j. \end{equation} 

\begin{proposition}[Spectral orthogonality formula] \label{prop:spectral-orthogonality-formula} 
Assume that \(\U_0=0. \) Then 
\begin{equation} \label{eq:defect-orthogonality} 
\mathcal E_{r,s}(f,g) = \frac1x \langle V_r f, V_s g \rangle_A. 
\end{equation} 
\end{proposition}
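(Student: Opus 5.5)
The plan is to reduce the statement to the Ward defect formula, Proposition~\ref{prop:ward-defect}, and then recognize the resulting sum as the diagonal bilinear form \eqref{eq:weighted-bilinear-form} evaluated on the two vectors \eqref{eq:Vr-definition} and \eqref{eq:Vs-definition}. Since $\U_0=0$, Proposition~\ref{prop:ward-defect} already gives
\[
\mathcal E_{r,s}(f,g)=\frac1x\sum_{j\in J_{r,s}}A_j\,\Delta_{j,r}f\,\Delta_{j,s}g,
\]
so no new computation with dilations or with the spectral product rule is needed.

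The identification proceeds pointwise in $x$. For each fixed admissible $x$, the vectors $V_rf(x)$ and $V_sg(x)$ are elements of $\mathbb K^{k-2}$ indexed by $J_{r,s}$. Their $j$-th components are $u_j=\Delta_{j,r}f(x)$ and $v_j=\Delta_{j,s}g(x)$, and $|J_{r,s}|=k-2$ by \eqref{eq:Jrs-cardinality}. Substituting these into \eqref{eq:weighted-bilinear-form} gives
\[
\langle V_rf(x),V_sg(x)\rangle_A=\sum_{j\in J_{r,s}}A_j\,\Delta_{j,r}f(x)\,\Delta_{j,s}g(x).
\]
This is exactly the numerator in the Ward defect formula. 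Dividing by $x$ yields \eqref{eq:defect-orthogonality}.

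The only point requiring care is bookkeeping rather than mathematics. The bilinear form is defined on $\mathbb K^{k-2}$ while its coordinates are labelled by $J_{r,s}$, so I would fix the order-preserving bijection between $J_{r,s}$ and $\{1,\ldots,k-2\}$. I would also note that the identity is read as an equality of functions evaluated at each $x$ in the common domain, where the $A_j$ are the Binet coefficients attached to the indices in $J_{r,s}$. For $k=2$, the set $J_{r,s}$ is empty, both sides vanish, and the statement is consistent with Corollary~\ref{cor:order-two-ward}.
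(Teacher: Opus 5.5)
Your proposal is correct and matches the paper's proof, which likewise just substitutes the definitions of $V_rf$, $V_sg$ and $\langle\cdot,\cdot\rangle_A$ into the Ward defect formula of Proposition~\ref{prop:ward-defect}. Your extra bookkeeping (indexing by $J_{r,s}$, reading the identity pointwise in $x$, and the empty case $k=2$) is harmless and only makes explicit what the paper leaves implicit.
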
 
\begin{proof} 
Substituting \eqref{eq:Vr-definition}, \eqref{eq:Vs-definition}, and \eqref{eq:weighted-bilinear-form} into \eqref{eq:ward-defect} yields \eqref{eq:defect-orthogonality}. \end{proof} 

\begin{corollary}[Spectral orthogonality criterion] \label{cor:spectral-orthogonality-criterion} 
Assume that \(\U_0=0. \) Then \(\mathcal E_{r,s}(f,g)=0 \) if and only if 
\begin{equation} \label{eq:spectral-orthogonality} 
\langle V_r f, V_s g \rangle_A = 0. 
\end{equation} 
\end{corollary}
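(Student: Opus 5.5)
The statement is an immediate consequence of Proposition~\ref{prop:spectral-orthogonality-formula}, so the plan is simply to transport the zero-defect condition through the orthogonality formula. Throughout we assume $\U_0=0$ and work with an admissible pair $(f,g)$ and fixed reference indices $r\neq s$.

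First, I invoke Proposition~\ref{prop:spectral-orthogonality-formula}, which gives
\[
\mathcal E_{r,s}(f,g)=\frac1x\,\langle V_r f, V_s g\rangle_A .
\]
Here the right-hand side is a function of $x$. The pairing $\langle V_rf(x),V_sg(x)\rangle_A$ is evaluated pointwise, and it coincides with the sum $\sum_{j\in J_{r,s}}A_j\Delta_{j,r}f\,\Delta_{j,s}g$ appearing in Corollary~\ref{cor:ward-zero-defect}.

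Second, I observe that multiplication by $x^{-1}$ is injective on the relevant function space, for instance on $\mathbb K[x,x^{-1}]$ or on functions defined for $x\neq0$. Its inverse is multiplication by $x$. Hence $\mathcal E_{r,s}(f,g)=0$ holds if and only if $\langle V_rf,V_sg\rangle_A=0$ identically. Equivalently, the statement follows directly from Corollary~\ref{cor:ward-zero-defect} by rewriting its condition \eqref{eq:ward-zero-defect-condition} in the bilinear-form notation \eqref{eq:weighted-bilinear-form}.

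The only point needing care is this multiplication step, so it is the nearest thing to an obstacle, though it is not a real one. We must be sure that dividing or multiplying by $x$ loses no information. This is the same step already used in Theorem~\ref{thm:zero-defect-criterion}. I would state explicitly that the identities are understood as identities of functions on the domain where $x\neq0$, or in the Laurent polynomial ring. With that convention both directions of the equivalence hold.
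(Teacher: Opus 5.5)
Your proposal is correct and matches the paper's proof, which likewise reads the criterion off Proposition~\ref{prop:spectral-orthogonality-formula}: $\mathcal E_{r,s}(f,g)=\frac1x\langle V_rf,V_sg\rangle_A$ vanishes exactly when the weighted pairing does. Your remark that multiplication by $x^{-1}$ is injective (on $x\neq0$, or in $\mathbb K[x,x^{-1}]$) makes explicit a step the paper leaves implicit.
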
 
\begin{proof} By Proposition~\ref{prop:spectral-orthogonality-formula}, \[ \mathcal E_{r,s}(f,g) = \frac1x \langle V_r f, V_s g \rangle_A. \] Thus the defect vanishes exactly when the weighted inner product is zero. 
\end{proof} 
The Ward regime is therefore governed entirely by the geometry of the residual spectral channels. The vanishing of the defect corresponds to spectral cancellation, which may arise either from invariance (\(V_r f=0\) or \(V_s g=0\)) or from genuine orthogonality among nontrivial interaction channels. The non-Ward regime behaves differently. There the interaction channels do not need to cancel completely; instead they must balance the explicit contribution generated by the nonzero value \(\U_0\). This situation is considered in the next subsection.

\subsection{The Non-Ward Regime: Spectral Balance}

We now consider the complementary situation \(\U_0\neq0\). In contrast with the Ward regime, the Leibniz defect contains an explicit initial-value contribution. Consequently, vanishing defect is no longer equivalent to the cancellation of the residual spectral channels. 
\begin{proposition}[Non-Ward defect formula] \label{prop:nonward-defect} 
Assume that \(\U_0\neq0. \) Then 
\begin{equation} \label{eq:nonward-defect} 
\mathcal E_{r,s}(f,g) = \frac1x \sum_{j\in J_{r,s}} A_j \Delta_{j,r}f \Delta_{j,s}g - \frac{\U_0}{x} T_{\lambda_r}f T_{\lambda_s}g. \end{equation} 
\end{proposition}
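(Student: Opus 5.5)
The statement is a direct specialization of Proposition~\ref{prop:defect-expression}, so the plan is simply to invoke that explicit defect formula and observe that nothing in it depends on the sign or vanishing of \(\U_0\). Concretely, I would start from Definition~\ref{def:leibniz-defect},
\[
\mathcal E_{r,s}(f,g)=\Du(fg)-T_{\lambda_r}f\,\Du g-T_{\lambda_s}g\,\Du f,
\]
and substitute the spectral product rule of Theorem~\ref{thm:spectral-product-rule}. That theorem was proved for arbitrary Binet data, using only the identities \(\sum_j A_jT_{\lambda_j}h=x\Du h\) and \(\sum_j A_j=\U_0\) from \eqref{eq:U0-Binet-coefficients}. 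No hypothesis on \(\U_0\) entered, so it remains valid when \(\U_0\neq0\).

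After the two first-order terms cancel, what remains is exactly the residual interaction term \(\mathcal S_{r,s}(f,g)\) from \eqref{eq:spectral-defect-part} plus the initial-value term \(\mathcal C_{r,s}(f,g)\) from \eqref{eq:initial-value-defect-part}. This gives \eqref{eq:nonward-defect}. The only point worth stating explicitly is the contrast with Proposition~\ref{prop:ward-defect}: there the term \(-\tfrac{\U_0}{x}T_{\lambda_r}fT_{\lambda_s}g\) was dropped because \(\U_0=0\). Here the coefficient \(\U_0\) is nonzero, so this term is retained, and it does not vanish identically unless \(T_{\lambda_r}f\,T_{\lambda_s}g=0\).

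There is no genuine obstacle. The only care needed is bookkeeping: the indices \(j=r\) and \(j=s\) must drop out of the full sum \(S_{r,s}\), which holds because \(\Delta_{r,r}f=0\) and \(\Delta_{s,s}g=0\) (Remark~\ref{rem:vanishing-spectral-increments}). Once this is checked, the sum reduces to one over \(J_{r,s}\), exactly as in the proof of Theorem~\ref{thm:spectral-product-rule}.
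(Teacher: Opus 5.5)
Your proposal is correct and follows the paper's own route: the paper likewise proves this simply by specializing Proposition~\ref{prop:defect-expression}, which holds for all Binet data and so applies verbatim when $\U_0\neq0$. Your extra re-check of the product-rule bookkeeping, namely that the $j=r$ and $j=s$ terms drop out, is accurate but not needed.
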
 
\begin{proof} 
The identity follows directly from Proposition~\ref{prop:defect-expression} under the assumption \(\U_0\neq0. \) 
\end{proof} 
The defect therefore contains two independent contributions: \begin{enumerate} 
\item the residual spectral interaction \[ \sum_{j\in J_{r,s}} A_j \Delta_{j,r}f \Delta_{j,s}g; \] 
\item the initial-value contribution \[ \U_0 T_{\lambda_r}f T_{\lambda_s}g. \] 
\end{enumerate} 
Consequently, defect-free behavior requires an exact balance between these two quantities. 
\begin{corollary}[Non-Ward zero-defect criterion] \label{cor:nonward-zero-defect} 
Assume that \(\U_0\neq0. \) Then 
\(\mathcal E_{r,s}(f,g)=0 \) if and only if 
\begin{equation} \label{eq:nonward-balance} 
\sum_{j\in J_{r,s}} A_j \Delta_{j,r}f \Delta_{j,s}g = \U_0 T_{\lambda_r}f T_{\lambda_s}g. 
\end{equation} 
\end{corollary}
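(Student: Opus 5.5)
The plan is to obtain this corollary as the specialization of the general zero-defect criterion, Theorem~\ref{thm:zero-defect-criterion}, to the case \(\U_0\neq0\). No new computation should be needed: the content is already contained in the explicit defect formula of Proposition~\ref{prop:nonward-defect}, which is Proposition~\ref{prop:defect-expression} under the hypothesis \(\U_0\neq0\).

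First, I would write out \(\mathcal E_{r,s}(f,g)\) using Proposition~\ref{prop:nonward-defect} as the difference
\[
\frac1x \sum_{j\in J_{r,s}} A_j \Delta_{j,r}f\, \Delta_{j,s}g \;-\; \frac{\U_0}{x} T_{\lambda_r}f\, T_{\lambda_s}g .
\]
Second, I would note that \(\mathcal E_{r,s}(f,g)=0\) holds exactly when these two terms agree as functions. Multiplying by \(x\), and dividing back by \(x\) for the converse, gives equation \eqref{eq:nonward-balance}. This is the same manipulation as in the proof of Theorem~\ref{thm:zero-defect-criterion}, so I could also simply cite that theorem. The condition \eqref{eq:general-zero-defect-condition} is literally \eqref{eq:nonward-balance}, so the equivalence follows at once.

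The only point that needs care is the legitimacy of multiplying and dividing by \(x\). The identities are meant on the admissible function space, where \(x^{-1}\) already appears in the definition of \(\Du\). Multiplication by \(x\) is therefore injective there, for instance in \(\mathbb K[x,x^{-1}]\) or on functions away from \(x=0\), so both directions of the equivalence hold.

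I would end with a remark explaining why the hypothesis \(\U_0\neq0\) matters, even though the algebra does not use it. It guarantees that the right-hand side of \eqref{eq:nonward-balance} is a genuinely nonzero balancing term whenever \(T_{\lambda_r}f\,T_{\lambda_s}g\neq0\). This is what distinguishes the non-Ward condition from the pure cancellation of Corollary~\ref{cor:ward-zero-defect}.
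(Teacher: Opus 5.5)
Your proposal is correct and takes the same route as the paper, which proves the corollary in one line as the specialization of Theorem~\ref{thm:zero-defect-criterion} to the case \(\U_0\neq0\). Your additional remarks are accurate but not needed for the proof: multiplying by \(x\) is legitimate in the admissible setting, and the hypothesis \(\U_0\neq0\) plays no role in the algebra.
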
 
\begin{proof} This is the specialization of Theorem~\ref{thm:zero-defect-criterion} to the case \(\U_0\neq0. \) 
\end{proof} Equation~\eqref{eq:nonward-balance} should be compared with the Ward condition \[ \sum_{j\in J_{r,s}} A_j \Delta_{j,r}f \Delta_{j,s}g = 0. \] In the Ward regime, the residual spectral channels must cancel each other. In the non-Ward regime, the same channels must instead reproduce exactly the contribution generated by the distinguished value \(\U_0\). \begin{remark}[Spectral balance] \label{rem:spectral-balance} The vanishing of the Leibniz defect is governed by two fundamentally different mechanisms: \[ \text{Ward} \quad\Longrightarrow\quad \text{spectral cancellation}, \] \[ \text{non-Ward} \quad\Longrightarrow\quad \text{spectral balance}. \] The first requires the residual interaction channels to annihilate each other. The second requires those channels to compensate exactly for the initial-value contribution produced by \(\U_0\). \end{remark} Thus the same bifurcation already detected through \[ \Du(1)=0 \qquad\text{or}\qquad \Du(1)=\U_0x^{-1} \] reappears in the multiplicative structure of the operator. In the Ward regime, zero defect is determined entirely by spectral geometry. In the non-Ward regime, it is determined by a balance between spectral geometry and the distinguished initial value. The next subsection examines this phenomenon on monomials and derives the corresponding arithmetic characterization in terms of the recurrence sequence itself.

\section{Quotient Rule} 

The quotient rule introduces no new structural obstruction beyond the Leibniz defect. Let \[ h=\frac{f}{g}, \] and assume that the spectral evaluations \[ T_{\lambda_r}g,\qquad T_{\lambda_s}g \] are nonzero on the domain under consideration. Since \[ f=hg, \] the spectral product rule gives \[ D_Uf = T_{\lambda_r}h\,D_Ug + T_{\lambda_s}g\,D_Uh + \mathcal L_{r,s}(h,g), \] where \[ \mathcal L_{r,s}(h,g) = D_U(hg) - T_{\lambda_r}h\,D_Ug - T_{\lambda_s}g\,D_Uh \] is the $(r,s)$-Leibniz defect. Using \[ T_{\lambda_r}h = \frac{T_{\lambda_r}f} {T_{\lambda_r}g}, \] we obtain the quotient identity. \begin{theorem}[General quotient rule] Assume \[ T_{\lambda_r}g\neq0, \qquad T_{\lambda_s}g\neq0. \] Then \[ D_U\!\left(\frac{f}{g}\right) = \frac{ T_{\lambda_r}g\,D_Uf - T_{\lambda_r}f\,D_Ug - T_{\lambda_r}g\,\mathcal L_{r,s}(f/g,g) }{ T_{\lambda_r}g\, T_{\lambda_s}g }. \] \end{theorem}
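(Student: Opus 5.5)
The plan is to obtain the quotient rule by reading the defining identity of the Leibniz defect backwards, with the factorization $f=hg$, $h=f/g$. No new spectral computation should be needed: all the spectral content is already contained in the defect term $\mathcal L_{r,s}(f/g,g)$. This term coincides with $\mathcal E_{r,s}(h,g)$ of Definition~\ref{def:leibniz-defect}, and its explicit form is given by Proposition~\ref{prop:defect-expression}.

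\textbf{Step 1.} Start from the identity
\[
D_U f = T_{\lambda_r}h\,D_Ug + T_{\lambda_s}g\,D_Uh + \mathcal L_{r,s}(h,g).
\]
This is a tautology: it is the definition of $\mathcal L_{r,s}(h,g)$ applied with $hg=f$, or equivalently Theorem~\ref{thm:spectral-product-rule} applied to the pair $(h,g)$. We only need $h$ and $g$ to be admissible, meaning that the dilations $T_{\lambda_j}h$ are defined on the domain. That holds as soon as $g$ does not vanish at the points $\lambda_j x$ that occur; in particular the hypotheses $T_{\lambda_r}g\neq0$ and $T_{\lambda_s}g\neq0$ are used exactly where we divide.

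\textbf{Step 2.} Isolate $D_Uh$. Since $T_{\lambda_s}g\neq0$, we can divide:
\[
D_Uh=\frac{D_Uf - T_{\lambda_r}h\,D_Ug - \mathcal L_{r,s}(h,g)}{T_{\lambda_s}g}.
\]

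\textbf{Step 3.} Remove $h$ from the first-order term. Each dilation is multiplicative, $T_\lambda(uv)=T_\lambda u\,T_\lambda v$, since it is just the substitution $x\mapsto\lambda x$. Applying this to $f=hg$ gives
\[
T_{\lambda_r}h = T_{\lambda_r}f/T_{\lambda_r}g,
\]
which is legitimate because $T_{\lambda_r}g\neq0$. Substitute this and multiply numerator and denominator by $T_{\lambda_r}g$. The numerator becomes
\[
T_{\lambda_r}g\,D_Uf - T_{\lambda_r}f\,D_Ug - T_{\lambda_r}g\,\mathcal L_{r,s}(f/g,g),
\]
over the denominator $T_{\lambda_r}g\,T_{\lambda_s}g$, which is exactly the stated formula.

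The only point needing care, and the closest thing to an obstacle, is the domain bookkeeping. We must confirm that $f/g$ lies in a space on which all the $T_{\lambda_j}$ and $D_U$ act, so that the product rule and the defect are defined for the pair $(f/g,g)$. We must also confirm that the two divisions use precisely the nonvanishing of $T_{\lambda_s}g$ and $T_{\lambda_r}g$. I would state this as an admissibility assumption, for instance working in rational functions or Laurent series where $g(\lambda_jx)$ is invertible.

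As a closing remark, Proposition~\ref{prop:defect-expression} can be inserted for $\mathcal L_{r,s}(f/g,g)$ to make the correction term fully explicit. For $k=2$ and $\U_0=0$ the correction term vanishes by Corollary~\ref{cor:order-two-ward}, and the formula reduces to a two-term $q$-type quotient rule.
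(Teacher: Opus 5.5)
Your proposal is correct and follows the paper's own derivation: you write $f=(f/g)\,g$, read the product rule for the pair $(f/g,g)$ as the definition of the defect $\mathcal L_{r,s}(f/g,g)$, solve for $D_U(f/g)$, and substitute $T_{\lambda_r}(f/g)=T_{\lambda_r}f/T_{\lambda_r}g$. Your admissibility caveat is a point the paper leaves implicit: $g(\lambda_j x)$ must be invertible for every $j$, not only for $j=r,s$, for $D_U(f/g)$ to be defined, and this holds automatically in $\mathbb K(x)$ or Laurent series.
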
 The formula reduces to a simpler form whenever the corresponding Leibniz defect vanishes. \begin{corollary}[Defect-free quotient rule] If \[ \mathcal L_{r,s}(f/g,g)=0, \] then \[ D_U\!\left(\frac{f}{g}\right) = \frac{ T_{\lambda_r}g\,D_Uf - T_{\lambda_r}f\,D_Ug }{ T_{\lambda_r}g\, T_{\lambda_s}g }. \] \end{corollary}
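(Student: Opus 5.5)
The plan is to obtain the corollary by specializing the general quotient rule stated just above. To keep the argument self-contained, and to make clear which hypotheses are used, I would also re-derive that rule directly from Definition~\ref{def:leibniz-defect}. Set $h=f/g$, so that $f=hg$ wherever $g\neq0$. Applying Definition~\ref{def:leibniz-defect} to the pair $(h,g)$ with reference roots $\lambda_r,\lambda_s$ gives
\[
D_Uf=D_U(hg)=T_{\lambda_r}h\,D_Ug+T_{\lambda_s}g\,D_Uh+\mathcal L_{r,s}(h,g).
\]
This holds for every admissible pair; it is just the definition of the defect rearranged. Under the hypothesis $\mathcal L_{r,s}(f/g,g)=0$ the last term disappears, leaving a linear equation for $D_Uh$ whose coefficient is $T_{\lambda_s}g$.

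Next, since $T_{\lambda_s}g\neq0$ on the domain, I will solve to get
\[
D_Uh=\frac{D_Uf-T_{\lambda_r}h\,D_Ug}{T_{\lambda_s}g}.
\]
Because dilation is a ring homomorphism, $T_{\lambda_r}(f/g)=T_{\lambda_r}f/T_{\lambda_r}g$, and this quotient is defined since $T_{\lambda_r}g\neq0$. Substituting this and multiplying numerator and denominator by $T_{\lambda_r}g$ produces exactly the claimed expression
\[
\frac{T_{\lambda_r}g\,D_Uf-T_{\lambda_r}f\,D_Ug}{T_{\lambda_r}g\,T_{\lambda_s}g}.
\]
Equivalently, one can set $\mathcal L_{r,s}(f/g,g)=0$ in the general quotient rule, whose numerator term $-T_{\lambda_r}g\,\mathcal L_{r,s}(f/g,g)$ then vanishes.

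There is no real obstacle here. The only point needing care is admissibility: $f/g$ must lie in a space on which the dilations $T_{\lambda_j}$ act, and $T_{\lambda_r}g$ and $T_{\lambda_s}g$ must be nonvanishing, so that the two divisions (by $T_{\lambda_s}g$ when solving, and by $T_{\lambda_r}g$ when rewriting $T_{\lambda_r}h$) are legitimate. I would state this explicitly, noting that it is exactly the standing assumption of the general quotient rule.
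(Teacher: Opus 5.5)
Your proposal is correct and takes essentially the same route as the paper. You write $f=(f/g)\,g$, rearrange the definition of the Leibniz defect for the pair $(f/g,g)$, solve for $D_U(f/g)$ using $T_{\lambda_r}(f/g)=T_{\lambda_r}f/T_{\lambda_r}g$ to recover the general quotient rule, and then set $\mathcal L_{r,s}(f/g,g)=0$. Your explicit admissibility remark, that $f/g$ must be defined at every $\lambda_j x$ and not only at $\lambda_r x$ and $\lambda_s x$, is a reasonable sharpening of the paper's rather informal hypotheses.
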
 Thus the denominator is no longer \(g(x)^2\), as in ordinary calculus, but rather the product of two spectral evaluations, \[ g(\lambda_r x)\,g(\lambda_s x). \] The quotient rule therefore reflects the same two-point spectral structure already present in the product rule.

\section {Qualitative Classification of Second-Order Calculi}

\subsection{Horadam sequence}

Let $\W_n$ denote the Horadam sequence \cite{horadam1} defined by
\begin{equation}\label{eqn_horadam}
    \W_{n+2}=sW_{n+1}+t\W_n
\end{equation}
with initial conditions $\W_0=a$ y $\W_1=b$. Let
\[
\Delta=s^2+4t.
\]
When \(\Delta\neq 0\), the characteristic roots are denoted by
\[
\phi=\frac{s+\sqrt{s^2+4t}}2,\qquad
\varphi=\frac{s-\sqrt{s^2+4t}}2.
\]
The Binet formula for the Horadam sequence Eq.(\ref{eqn_horadam}) is
\begin{equation}\label{eqn_binet_h}
    \W_n=\frac{\alpha\varphi^{n}-\beta\phi^{n}}{\varphi-\phi}
\end{equation}
where $\alpha=b-a\phi$ and $\beta=b-a\varphi$. If $\Delta<0$, then we obtain the representation for the Horadam numbers
\begin{equation}\label{eqn_HDN}
    \W_n=r^n\left(b\frac{\sin(n\theta)}{\sin(\theta)}-a\frac{\sin((n-1)\theta)}{\sin(\theta)}\right)
\end{equation}
where $r=\Re(\phi)$ is the real part of $\phi$.

\subsection{Lucas calculus as Ward calculus of second order}

If we use the initial conditions $a=0$ and $b=1$ in the definition of the Horadam sequence, we obtain the Lucas sequence
\begin{equation}\label{lucas}
\{n+2\}_{s,t}=s\{n+1\}_{s,t}+t\{n\}_{s,t}.
\end{equation}
We define the Lucas-derivative $\D_{s,t}$ of the function $f(x)$ as
\begin{equation}
    \D_{s,t} f(x)=\frac{f(\varphi x)-f(\phi x)}{(\varphi-\phi)x}.
\end{equation}
Depending on the nature of the discriminant $\Delta$, we obtain the following classification.

\subsubsection{Hyperbolic Lucas calculus}

If $\Delta>0$, we obtain the hyperbolic Lucas calculus. This corresponds to the case where the characteristic polynomial has two distinct real roots
\[
\varphi\neq\phi,\qquad\varphi,\phi\in\R.
\]
The basic numbers in this calculus are
\begin{equation}
    \brk[c]{n}_{s,t}=\frac{\varphi^{n}-\phi^{n}}{\varphi-\phi}
\end{equation}
so all Lucas-type machinery \cite{pashaev,orozco2,orozco3} works quite naturally.
The quotient $q=\frac{\varphi}{\phi}$ typically fulfill $\vert q\vert<1$ so
\[
\brk[c]{n}_{s,t}=\varphi^{n-1}\frac{1-q^n}{1-q}=\varphi^{n-1}[n]_q
\]
thus, Lucas numbers are a deformation of $q$-numbers, and we obtain a $\varphi$-deformed $q$-calculus \cite{orozco}.

\subsubsection{Parabolic Lucas calculus}

This refers to the coalescence of the roots $\varphi$ and $\phi$,
\[
\varphi=\phi.
\]
Equivalently, $\Delta=0$. The basic numbers are no longer determined by a quotient of differences, but by the limit
\begin{equation}
    \brk[c]{n}_{s,t}=n\rho^{n-1}
\end{equation}
where $\rho=s/2$. This regime is extremely interesting because it contains several sub-geometries.

\medskip
\noindent
\textbf{Expansive regime.} ($\rho>1$) This regime corresponds to the study of pantograph-type functions. \cite{ise1,ise2}.

\medskip
\noindent
\textbf{Classic regime.} ($\rho=1$) Then $\brk[c]{n}_{s,t}=n$ and Newton’s calculus is reproduced exactly. This is the point where Lucas’s calculus and classical calculus intersect.

\medskip
\noindent
\textbf{Contractive regime.} ($\rho<1$) Here, the Lucas integers collapse
\[
\brk[c]{n}_{s,t}=n\rho^{n-1}\rightarrow0
\]

\subsubsection{Elliptic Lucas calculus}

This applies to the case where the roots form a complex conjugate pair
\[
\phi=\overline{\varphi}.
\]
Equivalently, $\Delta<0$. Wrinting
\[
\varphi=re^{i\theta},\qquad\phi=re^{-i\theta}
\]
the Lucas-numbers are
\begin{equation}
    \brk[c]{n}_{s,t}=r^{n-1}\frac{\sin(n\theta)}{\sin(\theta)}.
\end{equation}
The theory is no longer governed by pure growth, and oscillatory phenomena emerge.

\medskip
\noindent
\textbf{Regular cases.}
If
\[
\frac{\theta}{\pi}\notin\Q
\]
No additional zeros appear. The calculus remains regular.

\medskip
\noindent
\textbf{Singular cases.}
If
\[
\frac{\theta}{\pi}\in\Q
\]
There exist indices $m>0$ such that
\[
\brk[c]{m}_{s,t}=0.
\]
The derivative annihilates certain monomials. Singular phenomena, non-trivial kernels and possible nilpotent structures arise.

\subsection{Horadam calculus as non-Ward calculus of second order}

We now consider second-order recurrence calculi generated by Horadam sequences \[ \W_{n+2}=s\W_{n+1}+t\W_n, \] with \[ \W_0=a\neq 0. \] Since \[ \D_\W(1)=\W_0x^{-1}=ax^{-1}, \] every such calculus belongs to the non-Ward regime. Unlike the Ward case, the polynomial algebra is no longer invariant under the action of the associated recurrence derivative. Negative powers appear immediately and Laurent-type extensions become natural.

\subsubsection{Hyperbolic Horadam calculus} 

Assume \[ \Delta=s^2+4t>0. \] Then the characteristic polynomial possesses two distinct real roots \[ \phi\neq\varphi, \qquad \phi,\varphi\in\mathbb R. \] The associated recurrence derivative is \[ \D_\W f(x) = \frac{\alpha f(\phi x)-\beta f(\varphi x)} {(\phi-\varphi)x}. \] Since \[ \W_0=a\neq0, \] the lowering process passes through degree zero and continues toward negative powers. This class provides the simplest examples of regular non-Ward calculi.

\subsubsection{Elliptic Horadam calculus} 

Assume \[ \Delta<0. \] Then \[ \phi=re^{i\theta}, \qquad \varphi=re^{-i\theta}, \] and the recurrence admits the trigonometric representation Eq.(\ref{eqn_HDN}). The resulting calculus exhibits oscillatory behaviour and splits naturally into regular and singular subclasses.

\paragraph{Singular elliptic case.} 

Since \(r\neq0\) and \(\sin\theta\neq0\) in Eq.(\ref{eqn_HDN}), a zero of the sequence is equivalent to 
\begin{equation} 
b\sin(n\theta) = a\sin((n-1)\theta). 
\end{equation} 
Equivalently, \[ (b\cos\theta-a)\sin((n-1)\theta) + b\sin\theta\cos((n-1)\theta) = 0. \] 
Thus, unlike the Ward case, the location of the zeros depends not only on the spectral angle \(\theta\), but also on the initial conditions \(a\) and \(b\). 

Whenever \[ b\cos\theta-a\neq0, \] the zero condition may be written as 
\begin{equation} 
\tan((n-1)\theta) = -\frac{b\sin\theta} {b\cos\theta-a}.
\end{equation} 
Consequently, additional zero indices exist if and only if the arithmetic progression \[ \{(n-1)\theta:n\ge1\} \] intersects the set \[ \arctan\!\left( -\frac{b\sin\theta} {b\cos\theta-a} \right) +\pi\mathbb Z. \] 
\begin{definition} 
An elliptic non-Ward calculus is called singular when there exists an index \[ m>0 \] such that \[ b\sin(m\theta) = a\sin((m-1)\theta). \] Otherwise it is called regular. 
\end{definition} 
The singular elliptic non-Ward regime differs fundamentally from the Ward case. For Ward calculi (\(a=0\)), singularity depends only on the spectral angle through the condition \[ \sin(m\theta)=0. \] For non-Ward calculi, singularity depends simultaneously on the spectral parameter \(\theta\) and on the ratio of the initial values \[ \frac{a}{b}. \] Thus the zero structure is no longer determined solely by the characteristic roots but also by the choice of recurrence solution.

\paragraph{Regular elliptic case.} 

In contrast with the Ward case, the zero structure is no longer determined solely by the spectral angle. It depends simultaneously on the angle \(\theta\) and on the initial values \(a\) and \(b\). 
\begin{definition} 
An elliptic non-Ward calculus is called regular if \[ b\sin(n\theta) \neq a\sin((n-1)\theta) \] for every integer \(n\ge1\). \end{definition} 
In this situation, \(W_n\neq0 \qquad (n\ge1),\) and therefore no additional zero indices occur. Consequently, \(Z_W=\varnothing,\) and the associated recurrence derivative does not annihilate any positive-degree monomial.

\subsubsection{Parabolic Non-Ward Calculus (Formal Limit)} 

Although the present paper is restricted to recurrence sequences with distinct characteristic roots, it is instructive to examine the formal limit in which the two characteristic roots coalesce. Assume \(\Delta=0,\) and let \(\rho=\frac{s}{2}.\) The second-order recurrence \[ W_{n+2}=sW_{n+1}+tW_n \] then possesses a repeated characteristic root \(\rho\). The corresponding solutions are no longer described by the distinct-root Binet formula. Instead they take the confluent form \[ W_n=(A+Bn)\rho^n. \] 
Imposing the initial conditions \(W_0=a, W_1=b, \) gives \[ A=a, \qquad B=\frac{b}{\rho}-a, \] and therefore 
\begin{equation} 
W_n = \rho^n \left( a+n\left(\frac{b}{\rho}-a\right) \right). 
\end{equation} 
Since \(W_0=a\neq0,\) the corresponding calculus belongs to the non-Ward branch. The associated recurrence values vanish precisely when \[ a+n\left(\frac{b}{\rho}-a\right)=0. \] Hence a zero index exists if and only if 
\begin{equation}\label{eqn_SPnW} 
n= \frac{a\rho}{a\rho-b} 
\end{equation} 
is a positive integer. Consequently, the parabolic non-Ward regime admits both regular and singular behaviors: 
\begin{enumerate} 
\item Regular parabolic non-Ward calculus: Eq.(\ref{eqn_SPnW}) has no positive integer solution. 
\item Singular parabolic non-Ward calculus: Eq.(\ref{eqn_SPnW}) has a positive integer solution. 
\end{enumerate} 
Unlike the parabolic Ward case, where the basic numbers are \(n\rho^{n-1},\) the parabolic non-Ward calculus depends essentially on the additional parameter \(a=W_0\). In particular, \[ D_W(1)=a\,x^{-1}, \] so the lowering process continues through degree zero and naturally extends into a Laurent-type framework. The systematic study of these confluent non-Ward operators lies outside the scope of the present paper and belongs more naturally to a future theory of repeated characteristic roots.

\subsubsection{Complete classification of second-order calculi}

\begin{theorem}\label{theo_CCSC}
Let \[ W_{n+2}=sW_{n+1}+tW_n \] be a nonzero second-order recurrence sequence and let \(\mathcal D_W\) denote its associated recurrence derivative. Then the calculus generated by \(W\) is completely determined by the triple \[ \bigl(W_0,\Delta,Z_W\bigr), \] where \[ \Delta=s^2+4t \] is the discriminant of the characteristic polynomial and \[ Z_W=\{n\ge 0:W_n=0\} \] is the zero set of the recurrence. More precisely: 
\begin{enumerate} \item The value \(W_0\) determines the primary bifurcation \[ W_0=0 \Longleftrightarrow \text{Ward calculus}, \qquad W_0\neq0 \Longleftrightarrow \text{non-Ward calculus}. \] \item The discriminant \(\Delta\) determines the spectral geometry: \[ \Delta>0 \Longrightarrow \text{hyperbolic}, \qquad \Delta=0 \Longrightarrow \text{parabolic}, \qquad \Delta<0 \Longrightarrow \text{elliptic}. \] \item The zero set \(Z_W\) determines the regularity type: \[ Z_W=\varnothing \Longrightarrow \text{regular}, \qquad Z_W\neq\varnothing \Longrightarrow \text{singular}. \] In the Ward case the index \(0\) is excluded from this test. 
\end{enumerate} 
Consequently, the classification problem for second-order recurrence calculi reduces to the determination of the three invariants \[ (W_0,\Delta,Z_W). \]
\end{theorem}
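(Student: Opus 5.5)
The plan is to treat Theorem~\ref{theo_CCSC} as a synthesis statement: it follows by combining results already established. The preliminary step is to make precise what ``the calculus generated by $W$'' means. I take it to mean the operator $\D_\W$ on monomials, together with its qualitative attributes: invariance of $\mathbb K[x]$, the form of the product rule, the kernel on positive-degree monomials, and the root geometry. By Proposition~\ref{prop:recovery-recurrence-from-DU}, the operator $\D_\W$ is determined, on the monomial basis, by the sequence $(\W_n)$ and conversely. So every qualitative attribute of the calculus can be read off from the sequence, and the task is to show that each attribute depends only on one of the three invariants.

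First, for item (1), I would invoke Proposition~\ref{prop:ward-characterization}. It gives $\W_0=0$ iff $\D_\W(1)=0$ iff $\mathbb K[x]$ is $\D_\W$-invariant. In addition, Corollary~\ref{cor:order-two-product-rule} shows that for $k=2$ the only correction to the two-term Leibniz rule is $-\W_0x^{-1}T_{\lambda_r}fT_{\lambda_s}g$. So $\W_0$ alone decides between the exact Leibniz form of Corollary~\ref{cor:order-two-ward} and the non-Ward form.

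Second, for item (2), the characteristic polynomial $t^2-st-t$ (with coefficients $s,t$) has roots $\phi,\varphi=(s\pm\sqrt\Delta)/2$. For real $s,t$, the sign of $\Delta$ decides whether the roots are distinct real, coincident, or complex conjugate. This yields respectively:
\begin{itemize}
\item the real dilation pair in $\D_\W f(x)=(\alpha f(\phi x)-\beta f(\varphi x))/((\phi-\varphi)x)$,
\item the confluent formal limit $\W_n=(A+Bn)\rho^n$,
\item the rotation pair $re^{\pm i\theta}$ with the trigonometric representation \eqref{eqn_HDN}.
\end{itemize}
This is exactly the hyperbolic/parabolic/elliptic trichotomy described in the subsections above, and it is independent of $a,b$.

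Third, for item (3), I would use the fundamental identity $\D_\W x^n=\W_nx^{n-1}$ (Theorem~\ref{thm:fundamental-monomial-identity}). It shows that $x^n$ lies in $\ker\D_\W$ iff $n\in Z_\W$. It also shows, via Theorem~\ref{thm:iterates-monomials}, that the descent $\D_\W^m x^n$ is interrupted iff some $\W_{n-j}=0$. Hence regularity, meaning no annihilated monomials and no truncated descent, is equivalent to $Z_\W=\varnothing$. In the Ward case the forced zero $0\in Z_\W$ only reproduces the termination at degree zero (Corollary~\ref{cor:termination-degree-zero}) and must be excluded. The explicit zero descriptions would then be cited: the $\sin(m\theta)=0$ condition in the elliptic Ward case, the condition $b\sin(m\theta)=a\sin((m-1)\theta)$ in the elliptic non-Ward case, and \eqref{eqn_SPnW} in the parabolic case. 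I would also note that in the hyperbolic case zeros can still occur; $|\W_n|$ vanishes for at most one $n$ when the roots are real.

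The main obstacle is that ``completely determined'' is not literally true at the level of the operator, since $(\W_0,\Delta,Z_\W)$ does not fix $s,t,a,b$. I would therefore state explicitly that the claim is about the qualitative type: each of the three invariants independently controls one of the three dichotomies, so the type is a function of the triple. A second difficulty is that the parabolic case lies outside the distinct-root hypothesis. I would handle it only as the formal limit already discussed, noting that items (1) and (3) there still rest on $\D_\W(1)=\W_0x^{-1}$ and the monomial identity applied to the confluent sequence.
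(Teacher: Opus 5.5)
Your proposal is correct and follows essentially the same synthesis as the paper's proof. Item (1) comes from $\mathcal D_W(1)=W_0x^{-1}$, item (2) from the sign of $\Delta$, and item (3) from the zero criteria, which you usefully tie to the kernel via $\mathcal D_W x^n=W_nx^{n-1}$; your caveat that ``completely determined'' can only refer to the qualitative type is a fair sharpening of the paper's closing sentence. One aside is false and should be dropped: a hyperbolic sequence can vanish infinitely often when $s=0$ (then $\varphi=-\phi$, and for $t>0$, $a=0$, $b=1$ one gets $W_n=0,1,0,t,0,t^2,\ldots$), although nothing in your argument depends on that remark.
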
 
\begin{proof} 
The first assertion follows from the operator identity \[ \mathcal D_W(1)=W_0x^{-1}, \] which yields the Ward/non-Ward dichotomy. The second assertion follows from the classical classification of second-order recurrences according to the sign of the discriminant \(\Delta=s^2+4t\). The corresponding hyperbolic, parabolic and elliptic regimes were described in Sections~9.2 and~9.3. Finally, regularity and singularity are characterized by the presence or absence of additional zeros of the recurrence sequence. For Ward calculi this follows from the Lucas representations of Sections~9.2.1--9.2.3. For non-Ward calculi it follows from the Horadam zero criteria established in Sections~9.3.2 and~9.3.3. Since the three invariants \((W_0,\Delta,Z_W)\) are independent and each admits a unique value for a given recurrence, the resulting classification is exhaustive and mutually exclusive. \end{proof}

\begin{corollary}[Classification invariants] 
For second-order recurrence calculi, the triple \[ (W_0,\Delta,Z_W) \] completely determines the qualitative regime of a second-order calculus
\end{corollary}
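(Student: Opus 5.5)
The corollary follows directly from Theorem~\ref{theo_CCSC}. The plan is to make precise what ``qualitative regime'' means and then check that each coordinate of the regime is a function of one of the three invariants. I would define the qualitative regime of a second-order calculus as the triple of labels
\[
(\text{Ward/non-Ward},\ \text{hyperbolic/parabolic/elliptic},\ \text{regular/singular}),
\]
where each label is read off from the operator $\mathcal D_W$ and the sequence $W$ exactly as in the three items of Theorem~\ref{theo_CCSC}. The corollary then asserts that the map from calculi to labels factors through the map $W\mapsto(W_0,\Delta,Z_W)$.

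First, the Ward/non-Ward label is determined by whether $\mathcal D_W(1)=W_0x^{-1}$ vanishes, by Proposition~\ref{prop:ward-characterization}. So it depends only on whether $W_0=0$.

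Second, the spectral label depends only on the sign of $\Delta=s^2+4t$. For $\Delta>0$ the roots $\phi,\varphi$ are distinct and real. For $\Delta<0$ they form a complex conjugate pair $re^{\pm i\theta}$. For $\Delta=0$ they coalesce. This is the standard discriminant trichotomy already recorded in the hyperbolic, parabolic and elliptic subsections.

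Third, the regular/singular label is defined by the existence of a zero index $m>0$ with $W_m=0$. This holds both in the Ward case (the condition $\{m\}_{s,t}=0$) and in the non-Ward definitions for the elliptic and parabolic branches. It is therefore a function of $Z_W$, more precisely of $Z_W\setminus\{0\}$, which is exactly the convention stated in item 3 of the theorem. By Theorem~\ref{thm:fundamental-monomial-identity}, singularity is equivalent to $\mathcal D_W x^m=0$ for some $m\ge1$, so this label is genuinely operatorial.

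Combining the three factorizations gives a well-defined map $(W_0,\Delta,Z_W)\mapsto$ regime, which is the claim. The main subtlety I expect is not computational but definitional. In the hyperbolic case the sections never state when singularity occurs. In the parabolic case the distinct-root theory does not strictly apply. I would handle both by using the uniform zero-set definition of singularity, so that no case-specific zero criterion is needed. I would also note that only the sign of $\Delta$, the vanishing of $W_0$, and the emptiness of $Z_W\setminus\{0\}$ are actually used. The full triple therefore carries more information than required, which only strengthens the determination statement.
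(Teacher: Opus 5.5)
Your proposal is correct and is essentially the paper's argument. The paper's proof is the single line that the corollary is immediate from Theorem~\ref{theo_CCSC}, and you make explicit how the three labels (Ward/non-Ward, spectral geometry, regular/singular) are read off from whether $W_0=0$, the sign of $\Delta$, and $Z_W\setminus\{0\}$. Your uniform zero-set test for singularity agrees with the theorem's convention of excluding the index $0$ in the Ward case. It is also the right way to handle the hyperbolic branch, where the paper states no singularity criterion, and the formal parabolic limit, where the distinct-root operator theory does not strictly apply.
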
 
\begin{proof} Immediate from Theorem \ref{theo_CCSC}. 
\end{proof}

\begin{remark} The singular and parabolic subclasses appear in this section only because completeness of the second-order classification requires their identification. The present paper does not attempt to develop the corresponding operator theories. In particular, the analysis of additional zero indices, the geometry of the zero set, nilpotent structures, truncation phenomena, and the operator calculus associated with repeated characteristic roots lies beyond the scope of this work. Nevertheless, these exceptional subclasses play an important conceptual role: they reveal the boundary of validity of the distinct-root theory and provide a first indication of the singular and confluent calculi that arise naturally from the recurrence-operator framework. Their systematic study will be developed in future work. \end{remark}

\section{Future Directions} 

The results obtained in this paper suggest that recurrence-induced calculus should be viewed not merely as a collection of isolated examples, but as the beginning of a broader operator-theoretic classification program. The construction \[ U \longmapsto \mathcal B_U \longmapsto D_U \] associates an operator calculus to spectral recurrence data and reveals several independent structures that remain only partially understood. 

A first direction concerns the classification of recurrence operators. For second-order calculi, Theorem~\ref{theo_CCSC} shows that the qualitative regime is determined by the invariants \[ (W_0,\Delta,Z_W). \] Whether analogous finite systems of invariants exist for higher-order recurrences remains an open problem. 

A second direction concerns operator equivalence. Given two recurrence sequences \(U\) and \(V\), one may ask under which conditions the associated operators \(D_U\) and \(D_V\) should be regarded as equivalent. This leads naturally to the search for spectral, algebraic, and functional invariants capable of classifying recurrence operators up to conjugacy or isomorphism. 

A third direction concerns the structure of zero-defect pairs. The Leibniz defect defines families \[ \mathcal Z_{r,s}(U) = \{(f,g):E_{r,s}(f,g)=0\}, \] whose geometry remains largely unexplored. The distinction between spectral cancellation in the Ward regime and spectral balance in the non-Ward regime suggests the existence of nontrivial classes of admissible pairs beyond the examples considered here. 

The non-Ward theory itself leads to a separate research program. Because \[ D_U(1)=U_0x^{-1}, \] negative powers arise immediately whenever \(U_0\neq0\). This indicates that Laurent and bilateral frameworks provide a more natural setting than ordinary polynomial algebras for the non-Ward branch. The systematic development of this theory will be undertaken elsewhere. 

The classification developed in Section~9 also exposes two exceptional regimes lying at the boundary of the present theory. The first is the singular regime, characterized by the existence of additional zero indices. In this situation the zero set \[ Z_U=\{n:U_n=0\} \] becomes an invariant of the calculus and may govern truncation phenomena, kernel structure, and nilpotent behavior. The second is the confluent regime associated with repeated characteristic roots. The appearance of parabolic calculi in the second-order classification suggests a broader operator theory involving Jordan-type spectral structures and confluent Binet operators. 

Taken together, these observations suggest a hierarchy of recurrence calculi organized by operatorial, spectral, algebraic, and singular invariants. The present paper provides the distinct-root foundations of this program, while the bilateral, singular, and confluent theories remain for future investigation.

\end{document}